\documentclass[reqno,11pt]{amsart}
 \usepackage{amsaddr}
\usepackage[text={500pt,660pt},centering]{geometry}
\usepackage{tikz}
\usetikzlibrary{positioning}
\usetikzlibrary{shapes.geometric, arrows}
\usepackage{color}
\usepackage{amssymb}
\usepackage{graphicx}
\usepackage{MnSymbol}
\usepackage{mathtools}
\usepackage[colorlinks=true, pdfstartview=FitV, linkcolor=blue, citecolor=blue, urlcolor=blue,pagebackref=false]{hyperref}
\usepackage{microtype}

\usepackage{bm}
\usepackage{scalerel} 
\usepackage{dsfont}
\usepackage{mathrsfs}
\usepackage{eufrak}
\usepackage[font={footnotesize}]{caption}
\usepackage[shortlabels]{enumitem}

\definecolor{darkgreen}{rgb}{0,0.5,0}
\definecolor{darkblue}{rgb}{0,0,0.7}
\definecolor{pink}{rgb}{0.8,0.25,0.5125}
\definecolor{purple}{rgb}{0.5,0.3,0.9}

\tikzstyle{startstop} = [rectangle, rounded corners, minimum width=3cm, minimum height=1cm,text centered, draw=black, fill=red!30]
 \tikzstyle{io} = [trapezium, trapezium left angle=70, trapezium right angle=110, minimum width=3cm, minimum height=1cm, text centered, draw=black, fill=blue!30]
 \tikzstyle{arrow} = [thick,->,>=stealth]

\tikzstyle{process} = [rectangle, minimum width=3cm, minimum height=1cm, text centered, draw=black, fill=orange!30]

\newtheorem{theorem}{Theorem}[section]

\newtheorem{lemma}[theorem]{Lemma}
\newtheorem{corollary}[theorem]{Corollary}

\theoremstyle{remark}
\newtheorem{remark}[theorem]{Remark}

\theoremstyle{definition}

\numberwithin{equation}{section}
\numberwithin{equation}{section}

\newcommand{\fint}{\strokedint}

\newcommand{\zmh}{\underline{L}^2}

\newcommand{\RR}{\mathbb{R}}

\newcommand{\e}{\varepsilon}

\newcommand{\de}{\delta}

\newcommand{\x}{\xi}

\newcommand{\rst}[1]{\ensuremath{{\mathbin\upharpoonright}%
\raise-.5ex\hbox{$#1$}}}

\newcommand{\sh}{\mathcal{H}}

\newcounter{gscan}
\newcounter{bwscan}
\newcounter{cscan}
\newcounter{hscan}
\newcounter{fscan}
\newcounter{pscan}
\newcounter{sscan}
\newcounter{iscan}
\newcounter{rscan}
\newcounter{rrscan}
\newcounter{fpscan}

\renewcommand{\tilde}{\widetilde}

\newcommand{\tstar}[5]{
\pgfmathsetmacro{\starangle}{360/#3}
\draw[#5] (#4:#1)
\foreach \x in {1,...,#3}
{ -- (#4+\x*\starangle-\starangle/2:#2) -- (#4+\x*\starangle:#1)
}
-- cycle;
}

\newcommand{\ngram}[4]{
\pgfmathsetmacro{\starangle}{360/#2}
\pgfmathsetmacro{\innerradius}{#1*sin(90-\starangle)/sin(90+\starangle/2)}
\tstar{\innerradius}{#1}{#2}{#3}{#4}
}

\usepackage[utf8]{inputenc}

\title[Complex Analytic Dependence in ENZ Materials]{Complex Analytic Dependence on the Dielectric Permittivity in ENZ Materials: The Photonic Doping Example}
\author{Robert V. Kohn \and Raghavendra Venkatraman}
\address{Courant Institute of Mathematical Sciences}
\email{kohn@cims.nyu.edu,raghav@cims.nyu.edu}
\date{\today}

\begin{document}
\begin{abstract}
Motivated by the physics literature on ``photonic doping'' of scatterers made from ``epsilon-near-zero'' (ENZ) materials, we consider how the scattering of time-harmonic TM electromagnetic waves by a cylindrical ENZ region $\Omega \times \RR$ is affected by the presence of a ``dopant'' $D \subset \Omega$ in which the dielectric permittivity is not near zero. Mathematically, this reduces to analysis of a 2D Helmholtz equation $\mathrm{div}\, (a(x)\nabla u) + k^2 u = f$ with a piecewise-constant, complex valued coefficient $a$ that is nearly infinite
(say $a = \frac{1}{\delta}$ with $\delta \approx 0$) in $\Omega \setminus \overline{D}.$ We show (under suitable hypotheses) that the solution $u$ depends analytically on $\delta$ near $0$, and we give a simple PDE characterization of the terms in its Taylor expansion. For the application to photonic doping, it is the leading-order corrections in $\delta$ that are most interesting: they explain why photonic doping is only mildly affected by the presence of losses, and why it is seen even at frequencies where the dielectric permittivity is merely small. Equally important: our results include a PDE characterization of the leading-order electric field in the ENZ region as $\delta \to 0$, whereas the existing literature on photonic doping provides only the leading-order magnetic field.
\end{abstract}
\maketitle

 \section{Introduction}
 A body of literature has developed concerning the design of electromagnetic devices using epsilon-near-zero (ENZ) materials \cite{Eng-pursuing,Eng1,ENZ1}. A significant part of it focuses on the transverse magnetic setting, where the magnetic field has the form $\vec H(x) = (0,0,H(x_1,x_2))$ and the time-harmonic Maxwell's equations reduce  to the two-dimensional equation
 \begin{align} \label{e.helmholtz.intro}
     -\nabla \cdot \left( \frac{1}{\e} \nabla H \right) - \omega^2 \mu H = f.
 \end{align}
 Here $\e = \e(x_1,x_2)$ is the dielectric permittivity and $\mu = \mu(x_1,x_2)$ is the magnetic permeability, both of which are in general complex-valued and frequency-dependent (i.e., $\e = \e(\omega)$ and $\mu = \mu(\omega)$, where $\omega$ denotes frequency). Within this time-harmonic setting, the electric field is described by the gradient of the scalar function $H$ through $\vec E = \frac{1}{i \omega \e}(-\partial_{x_2} H, \partial_{x_1}H, 0).$ The function $f$ models a source that can be quite general.

 It is easy to see from \eqref{e.helmholtz.intro} what is special about ENZ materials: since it is natural to expect that $\frac{1}{\e}\nabla H$ is bounded, we expect $H$ to be nearly constant in a region where $\e$ is nearly $0$. In the limit $\e \rightarrow 0$, we are not solving a PDE in the ENZ region but rather selecting the constant value of $H$. This creates curious effects; for example, parts of the ENZ region that are distant from one another are nevertheless tightly coupled, since they share the same (constant) value of $H$; moreover the shape of the ENZ region is less important than it would be for a PDE, since all that is happening there is the selection of a constant $H$. This effect has for example been used (i) to design entirely new types of waveguides \cite{alu-etal-pre2008,silveirinha-engheta-prl2006,silveirinha-engheta-prb2007a,silveirinha-engheta-prb2007b,zhou-etal-pra2020}, and (ii) to show how ``dopants'' can give an ENZ region an ``effective magnetic permeability'' $\mu_{\mathrm{eff}}$ that is entirely different from its true magnetic permeability $\mu$ \cite{Eng2,silveirinha-engheta-prb2007a,zhou-etal-pra2020}.

 The preceding discussion assumes that the PDE \eqref{e.helmholtz.intro} has a meaningful limit as $\e \rightarrow 0$ in the region occupied by the ENZ material. We shall prove this, but we are mainly interested in the corrections that appear when $\e$ is \emph{nonzero}. Making a choice, we shall focus on the second application mentioned above-- photonic doping-- though the strategy we follow should also be applicable to waveguides.

 To state the main results of this paper, we begin by describing the photonic doping problem that we study. The scatterer has cross-section $\Omega \subset \RR^2$, with finitely many cylindrical dopant rods. We let $D \subset \Omega$ denote the union of the dopants (see Fig. \ref{f.intro}), and consider the Helmholtz equation \eqref{e.helmholtz.intro} with piecewise constant $\e $ and $\mu$, with $\e$ small in magnitude in the ENZ region $\Omega \setminus \overline{D}$. Our analysis is sufficiently general to permit $\e$ and $\mu$ taking distinct constant complex values in the exterior $\RR^2 \setminus \overline{\Omega},$ the ENZ region $\Omega \setminus \overline{D},$ and each of the dopant rods, respectively. So long as the $|\e| \ll 1$ in the ENZ region, the mathematical analysis of the problem is not complicated by the value of these other constants, as well as the (finite) number of connected components of the dopant $D$.
 \begin{figure}[h]
     \centering
     \includegraphics[scale=.6]{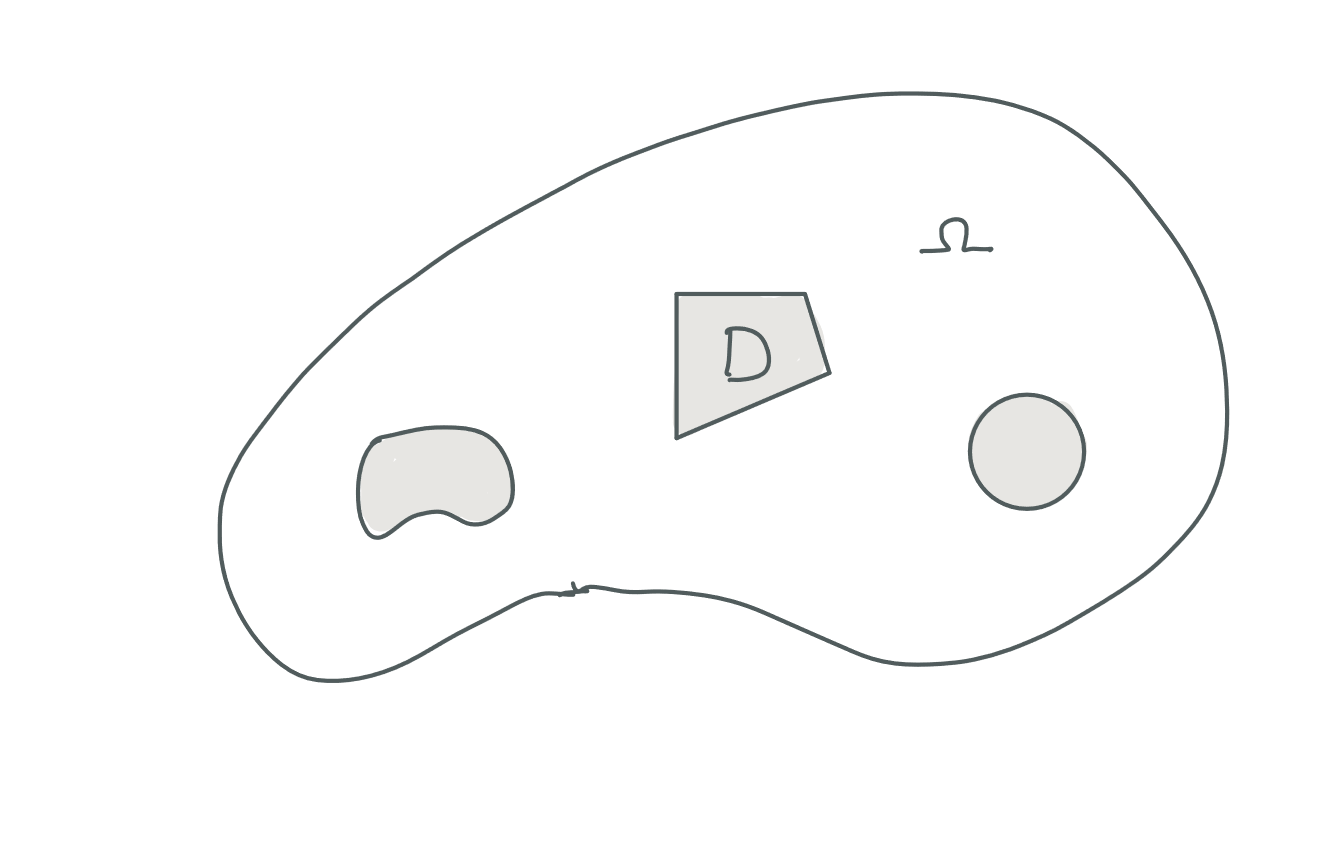}
     \caption{The photonic doping problem}
     \label{f.intro}
 \end{figure}
However, for simplicity, and in order to focus on the mathematical essence, we make some reductions.  We assume that there is a single dopant rod $D$ (see Fig. \ref{fig:setup}) immersed in the ENZ region; we take $\e = 1$ in the exterior; and we take $\e = \delta \in \mathbb{C} \setminus \{0\}, |\delta| \ll 1$ in the ENZ region. Also, to simplify the presentation, we assume that $\e = 1$ in the dopant. Thus, we shall study the case
\begin{align} \label{e.permi}
    \e_\delta (x) := \left\{
    \begin{array}{cc}
        1  & x \in \RR^2 \setminus \overline{\Omega},  \\
        \delta &  x \in \Omega \setminus \overline{D},\\
        1 & x \in D.
    \end{array}
    \right.
\end{align}
We shall assume that the dopant is not resonant (see Section \ref{s.asspt} for a precise statement, and
Section \ref{s.exam} for some discussion about what happens if the dopant is resonant). Our analysis requires no other condition on $\delta;$ in particular, it applies equally to the case $\mathrm{Im}(\delta) > 0$ (corresponding to a lossy material) and the case $\mathrm{Im}(\de) < 0$ (corresponding to a material with gain) -- provided, of course, that $|\delta|$ is sufficiently small. As for the value of $\mu$, we take it to be the same in all regions (though the case where it takes different values in the three regions would not be fundamentally different). For the exact assumptions on the regularity of $\Omega$ and $D$, as well as our assumptions on the source, we refer the reader to Section \ref{s.pbmr}.

Our main result, stated in Theorem \ref{t.mt} below, asserts that the solution to
\eqref{e.helmholtz.intro} depends analytically on the complex variable $\delta$ in a neighborhood of $0$.
Let us emphasize that the behavior of the magnetic field $\vec{H}$ in the limit $\delta = 0$ is correctly
found (though not proved) in the physics literature \cite{Eng2,silveirinha-engheta-prb2007a}.
As previously mentioned, this amounts to selecting the correct constant value of the scalar function $H$.
However, the physics literature offers no analytical understanding about:
\begin{enumerate}
    \item the character and magnitude of corrections due to $\delta \neq 0,$ and
    \item the character and magnitude of the electric field $\vec{E}$, which, as we show below, has a nonzero limit as $|\delta| \to 0.$
\end{enumerate}
As corollaries to our analysis, we obtain simple and reasonably explicit information on both points. Such
information is physically relevant. To explain why, we remind the reader that for real materials, the permittivity
$\e = \e' + i \e''$ is a complex-analytic function of the frequency $\omega$; moreover its imaginary part
represents losses (which are always present in real materials), so $\e''(\omega) > 0$ when
$\omega$ is real and positive (see e.g. \cite{landau-lifshitz} sections 77 and 80). Thus, while the notion
of an epsilon-near-zero material is a useful idealization, the physically relevant value of our
parameter $\delta$ has positive imaginary part, and the real part of $\delta$ is nonzero except at isolated frequencies.
For the effect of ``photonic doping'' to be robust with respect to material losses and frequency
variation, the corrections due to nonzero $\delta$ must be small. Our analyticity results show that
these corrections are of order $O(|\delta|)$.

Concerning (2), we recall that the Poynting vector $S$ is defined by
$S := \frac{1}{2} \vec{E} \times \overline{\vec{H}}$. (Here, as usual, for any complex number
$z,$ $\overline{z}$ denotes its complex conjugate.) The real part of $S$ is the time-averaged power flow,
while the imaginary part is the reactive power flow. It is of interest to understand the
spatial structure of these flows. Since the Poynting vector involves both the magnetic field $\vec{H}$
and the electric field $\vec{E},$ this requires understanding both fields in the limit $\delta \rightarrow 0$.
For ENZ waveguides, this was the focus of \cite{liberal2020near}; we are not aware of a similar study in the photonic doping setting.

\subsection*{Some related literature} \label{ss.mathlit}
We close this introduction with a brief discussion of some related literature.

\begin{enumerate}
\item The paper \cite{Bruno} by O. Bruno considers the PDE $\mathrm{div}(a(x)\nabla u) = 0$ for coefficient fields $a$ that take two values: the conductivity of the inclusions is $\delta \in \mathbb{C}\setminus \{0\},$ and that of the background matrix is $1.$ It studies the effective conductivity as a function of $\delta$ for both periodic and random coefficient fields $a,$ and provides conditions on the geometry of the inclusions that ensure analyticity of the effective conductivity near $\delta \approx 0$ and $\delta \approx \infty.$ Our approach is not fundamentally different, but our geometry is more specific; moreover, our focus is the Helmholtz equation.

\item The papers \cite{BF} and \cite{zhi} study how a periodic array of inclusions can lead to a frequency-dependent effective $\mu$, in a suitable homogenized limit. In photonic doping, an effective permeability $\mu_{\mathrm{eff}}$ emerges due to the presence of the dopant (see Remark \ref{r.mueff} below). However, our setting is different from \cite{BF,zhi} since we do not consider an array of inclusions, and in particular, no homogenization procedure is being carried out. And yet our setting shares some features with that of \cite{BF,zhi}, since here and also there, the effective permeability is directly related to the solution of a Helmholtz equation in the dopant with boundary condition $1.$

\item  In the present work, we assume that the ENZ region is a uniform electromagnetic medium,
characterized by two constants, its complex permittivity and permeability. There are real materials
and experimental systems for which this is the case
(see for example \cite{App4,edwards-etal-prl2008,ENZ1}).
A different but related body of literature is about metamaterials that achieve an effective $\e$
that is close to zero at certain frequencies, as a consequence of their specific
microstructure (see \cite{ENZ1}, or for a recent example \cite{lusk}). In such systems, the
robustness of the effect to the variation of the frequency $\omega$ or the presence of loss
depends not only on the effects studied here, but also on how well their behavior is captured
by the effective $\e$.

\item In our study of photonic doping the parameter $\delta$ tends to zero while the geometry
is fixed. It is of course interesting to ask whether other exotic
effects can be achieved by letting the microstructure vary as a physical parameter
approaches $0$ or $\infty$. The answer is yes; indeed, many papers on
electromagnetic metamaterials have this character.
For divergence-form equations of the form $\mathrm{div}\,(a(x)\nabla u) = 0$ with $u$ scalar, the full set of limits
obtainable this way has been characterized \cite{camar-eddine-seppecher-scalar}. The analogous question for
linear elasticity has also been addressed, and the set of all possible limits is
surprisingly rich \cite{camar-eddine-seppecher-elastic}. We are not aware of analogous results for
Maxwell's equations.

\end{enumerate}
\subsection*{Organization of the paper} After precisely formulating the problem and recording some preliminaries, we state our main results in Section \ref{s.pbmr}. This section also contains a roadmap of the proof of the main theorem (see subsection \ref{ss.mainidea}). We introduce certain operators that arise in describing higher order correctors in Section \ref{s.opt}. The proof of the main theorem and its corollaries are in Section \ref{s.proofs}, and we conclude the paper in Section \ref{s.exam} by considering what happens when the dopant is resonant (or nearly so).

\section{Formulation of the Problem and Main Results} \label{s.pbmr}

\subsection{Setting up the problem}
We are given a bounded domain $\Omega \subset \RR^2,$ with Lipschitz boundary (i.e., such that the boundary is locally the graph of a Lipschitz continuous function), and an open, Lipschitz subdomain $D \Subset \Omega$ that represents the dopant. Here and throughout the paper, we use the notation $A \Subset B$ to denote that
$\overline{A} \subset B$, where $\overline{A}$ is the closure of $A$. The ENZ region fills the set $\Omega \setminus \overline{D}.$ The unit outward pointing normal to $\partial \Omega$ and $\partial D$ are denoted by $\nu_\Omega$ and $\nu_D$ respectively.

We let $\delta \in \mathbb{C} \setminus \{0\}$ be a complex number, and define the spatially inhomogeneous but isotropic permittivity function as in \eqref{e.permi}. Suppose $f$ is a source that is supported compactly outside $\overline{\Omega}.$ To be precise, we suppose
\begin{align*}
    f \in L^2(K)
\end{align*}
for some compact set $K \Subset \RR^2 \setminus \overline{\Omega}$ (see Figure \ref{fig:setup}).
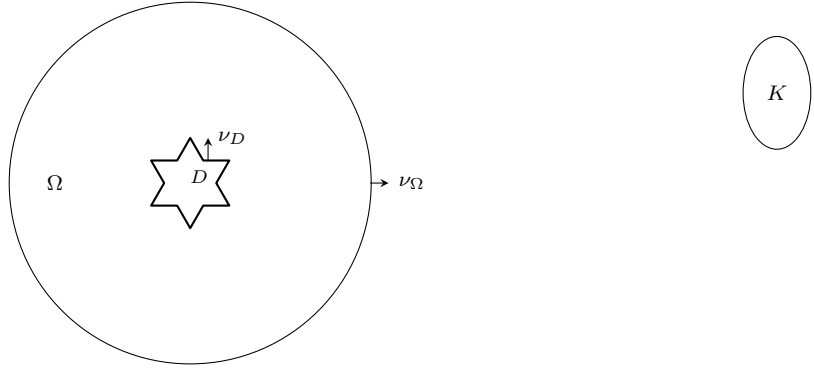
\begin{figure}[h]
    \centering
      \begin{tikzpicture}[line cap=round,line join=round,scale=.6,font=\scriptsize] \draw (0,0) circle (4.01cm) ;
        \draw (13,2) ellipse (.75 and 1.25) node []{$K$};
    \node at (-3, 0)  {$\Omega$};
\ngram{1}{6}{0}{thick} (.1cm); node  {\tiny $D$}    ;
   \draw [-stealth](3.6,0) -- (4,0) node [right]{$\nu_\Omega$};
   \draw [-stealth](0,.5) -- (0,1) node [right]{$\nu_D$};
    \end{tikzpicture}
    \caption{Set up of the problem: $D$ represents the dopant rod immersed in the ENZ material, which occupies the open set $\Omega \setminus \overline{D}$. The scattering response of the system is excited through a source $f$ supported in a compact set $K \subset \RR^2 \setminus \overline{\Omega}$. The unit outward pointing normal to the sets $D$ and $\Omega$ are denoted respectively by $\nu_D$ and $\nu_\Omega.$}
    \label{fig:setup}
\end{figure}
\noindent Finally, defining the operator
\begin{align*}
    \mathcal{L}_\delta := - \nabla \cdot \left( \frac{1}{\e_\delta(x)} \nabla  \right) - k^2 ,
\end{align*}
where $k$ is a nonzero complex number with $0 \leqslant \mathrm{arg} \, k < \pi$,
we consider the problem
\begin{eqnarray} \label{e.PDE}
    \mathcal{L}_\delta u_\delta = f ,  \mbox{ in } \RR^2, \\ \label{e.bcinfty}
    \lim_{r \to \infty} \sqrt{r}\left( \frac{\partial }{\partial r} - ik\right)u_\delta = 0.
\end{eqnarray}
Here, as usual, $r = |x|$ is the distance to the origin. The second condition is the Sommerfeld radiation condition, which assures uniqueness by selecting a solution that is ``outgoing'' at infinity. One interprets the PDE
in \eqref{e.PDE} along $\partial D$ and $\partial \Omega$ by requiring continuity of $u_\delta$ and
$\frac{1}{\e_\delta} \frac{\partial u_\delta}{\partial \nu}. $ Our overall goal is to understand the
dependence of the solution $u_\delta$ on the parameter $\delta$ near $\delta = 0$.

\subsection{The nonresonance condition and three auxiliary functions} \label{s.asspt}

Our analysis requires that the dopant be non-resonant, in other words that $k^2$ not be a Dirichlet eigenvalue of the
Laplacian in $D$. Equivalently:
\begin{align}\label{nonres-cond}
&\mbox{\it We assume that the Helmholtz operator $-\Delta - k^2$} \\[-5pt]
&\mbox{\it has trivial kernel in $H^1_0(D)$.} \nonumber
\end{align}
By elliptic theory, this condition holds for all but a countable collection of $k^2 \in (0,\infty).$ (For some
discussion about what happens when this condition fails, see Section \ref{s.exam}.)

The solutions of the following three auxiliary problems will play key roles in our analysis:
\medskip

\noindent
\textit{Problem 1:} Let $s: \RR^2 \setminus \overline{\Omega} \to \RR$ denote the {unique solution} to
\begin{equation} \label{e.PDEs} \left\{
    \begin{aligned}
        -\Delta s &= k^2 s + f \quad  &\mbox{ in } \RR^2 \setminus \overline{\Omega},\\
        s &= 0 \quad  &\mbox{ along } \partial \Omega ,  \\
        s &\mbox{ satisfies the radiation condition (\ref{e.bcinfty}) at infinity.}
    \end{aligned} \right.
\end{equation}

\noindent
\textit{Problem 2:} Let $\psi_e$ denote the {unique solution} to
\begin{equation} \label{e.PDEpsiext} \left\{
    \begin{aligned}
        -\Delta \psi_e &= k^2 \psi_e  \quad &\mbox{ in } \RR^2 \setminus \overline{\Omega}, \\
        \psi_e &= 1 \quad &\mbox{ along } \partial \Omega, \\
        \psi_e &\mbox{ satisfies the radiation condition (\ref{e.bcinfty}) at infinity. }
    \end{aligned} \right.
\end{equation}

\noindent
\textit{Problem 3:} Let $\psi_d$ denote the unique solution to
\begin{equation} \label{e.PDEpsidop} \left\{
     \begin{aligned}
        -\Delta \psi_d &= k^2 \psi_d  \quad &\mbox{ in } {D}, \\
        \psi_d &= 1 \quad &\mbox{ along } \partial D .
    \end{aligned} \right.
\end{equation}

\noindent
The existence and uniqueness of solutions to Problems 1 and 2 is well-known, see for example \cite[Theorem 9.11]{McL}.
The existence and uniqueness of $\psi_d \in H^1(D)$ follows by Fredholm theory from our nonresonance hypothesis \eqref{nonres-cond}.

Our analysis will require dividing by the complex number
\begin{equation}
    \label{e.c*fin}
  \beta :=  k^2 |\Omega \setminus \overline{D}| + \int_{\partial \Omega} \frac{\partial \psi_e}{\partial \nu_\Omega} \,d \sh^1 - \int_{\partial D} \frac{\partial \psi_d}{\partial \nu_D} \,d \sh^1 .
\end{equation}
(Our sign conventions for the normal derivatives are shown in Fig. \ref{fig:setup}: $\nu_\Omega$ denotes
the outward pointing unit normal to $\partial \Omega,$ and $\nu_D$ denotes the outward pointing unit normal to $\partial D.$) It is therefore important to know that this never vanishes.

\begin{lemma} \label{beta-nonzero} The complex number $\beta$ defined by \eqref{e.c*fin} is always nonzero.
\end{lemma}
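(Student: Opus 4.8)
The plan is to realize $\beta$ as an energy-type quadratic functional of a single auxiliary function obtained by gluing $\psi_e$, the constant $1$, and $\psi_d$ together, and then to deduce $\beta \neq 0$ from the sign of its imaginary part (or, in one degenerate case, its real part). First I would introduce the global comparison function $W \colon \RR^2 \to \mathbb{C}$ defined by $W = \psi_e$ on $\RR^2 \setminus \overline{\Omega}$, $W \equiv 1$ on $\Omega \setminus \overline{D}$, and $W = \psi_d$ on $D$. Since $\psi_e = 1$ along $\partial \Omega$ and $\psi_d = 1$ along $\partial D$, the three pieces match continuously, so $W \in H^1_{\mathrm{loc}}(\RR^2)$; moreover $-\Delta W = k^2 W$ holds in $\RR^2 \setminus \overline{\Omega}$ and in $D$, and $W$ inherits the radiation condition \eqref{e.bcinfty}.

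Next I would fix $R$ with $\overline{\Omega} \Subset B_R$ and apply integration by parts (Green's first identity) separately on the three pieces $B_R \setminus \overline{\Omega}$, $\Omega \setminus \overline{D}$, and $D$ --- testing $-\Delta \psi_e = k^2 \psi_e$ against $\overline{\psi_e}$, using $\nabla W \equiv 0$ on $\Omega \setminus \overline{D}$, and testing $-\Delta \psi_d = k^2 \psi_d$ against $\overline{\psi_d}$ --- and then add the three identities. Because $\overline{W} = 1$ on $\partial \Omega$ and on $\partial D$, the interface contributions reassemble exactly into the three terms defining $\beta$ in \eqref{e.c*fin}, leaving
\[
  \beta \;=\; k^2 \int_{B_R} |W|^2 \;-\; \int_{B_R} |\nabla W|^2 \;+\; \int_{\partial B_R} \overline{W}\, \partial_r W \, d\sh^1 ,
\]
valid for every such $R$ (where $W = \psi_e$ on $\partial B_R$). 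On the Lipschitz interfaces $\partial \Omega$ and $\partial D$ the normal-derivative integrals in \eqref{e.c*fin} should be read as the $H^{-1/2}$--$H^{1/2}$ dualities against the constant $1$, and Green's identity holds in that sense; tracking the orientations of $\nu_\Omega$ and $\nu_D$ is precisely what makes the interface terms add up to $\beta$ rather than $-\beta$.

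Finally I would conclude by cases on $\arg k$. If $\mathrm{Im}\, k > 0$, then $\psi_e$ and $\nabla \psi_e$ decay exponentially, so letting $R \to \infty$ annihilates the boundary integral and gives $\beta = k^2 \int_{\RR^2} |W|^2 - \int_{\RR^2} |\nabla W|^2$ with both integrals finite; since $|\Omega \setminus \overline{D}| > 0$ we have $\int_{\RR^2} |W|^2 > 0$, so $\mathrm{Im}\, \beta = 2\,(\mathrm{Re}\, k)(\mathrm{Im}\, k) \int_{\RR^2} |W|^2 \neq 0$ when $\mathrm{Re}\, k \neq 0$, while when $\mathrm{Re}\, k = 0$ one has $k^2 < 0$ and hence $\beta \leqslant -|k|^2 \int_{\RR^2} |W|^2 < 0$. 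If instead $k > 0$ is real, I would take imaginary parts in the displayed identity: the two integrals over $B_R$ are real, so $\mathrm{Im}\, \beta = \mathrm{Im} \int_{\partial B_R} \overline{\psi_e}\, \partial_r \psi_e \, d\sh^1$ for every large $R$, and the Sommerfeld condition ($\partial_r \psi_e = i k \psi_e + o(r^{-1/2})$ together with $\psi_e = O(r^{-1/2})$) lets this pass to the limit, yielding $\mathrm{Im}\, \beta = k\, Q$ with $Q := \lim_{R \to \infty} \int_{\partial B_R} |\psi_e|^2 \, d\sh^1 \geqslant 0$. That $Q > 0$ follows from Rellich's lemma and unique continuation --- if $Q = 0$ then $\psi_e \equiv 0$ in the unbounded component of $\RR^2 \setminus \overline{\Omega}$, contradicting $\psi_e = 1$ on $\partial \Omega$ --- so $\mathrm{Im}\, \beta = k Q > 0$, and $\beta \neq 0$ in every case. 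The one place that demands genuine care is the energy identity itself: interpreting the normal-derivative terms of \eqref{e.c*fin} correctly on the Lipschitz boundaries and keeping the normal orientations straight so the interface terms assemble into $\beta$; the two $R \to \infty$ limits and the Rellich step are then routine.
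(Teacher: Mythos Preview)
Your proof is correct and follows essentially the same strategy as the paper's: integration by parts against the complex conjugate, combined with the far-field behavior of $\psi_e$ (Rellich's lemma) to control the imaginary part. The only difference is presentational: you package the three pieces into a single function $W$ and derive one energy identity $\beta = k^2\!\int_{B_R}|W|^2 - \int_{B_R}|\nabla W|^2 + \int_{\partial B_R}\overline{W}\,\partial_r W$, then split on $\mathrm{Im}\,k$, whereas the paper keeps the $\psi_e$ and $\psi_d$ identities separate and, when $\mathrm{Im}\,k>0$, examines $\mathrm{Im}(k\overline{\beta})$ rather than $\mathrm{Im}\,\beta$ itself (which lets it avoid the subcase $\mathrm{Re}\,k=0$ that you handle separately). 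Both routes rest on the same two facts --- the energy identity for $\psi_d$ and the nonvanishing of the far-field of $\psi_e$ --- so the arguments are interchangeable.
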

\begin{proof}
A result sometimes known as Rellich's lemma says that if $u$ solves $\Delta u + k^2 u = 0$ in $\RR^2 \setminus \overline{\Omega}$
and satisfies the radiation condition \eqref{e.bcinfty} at infinity, then
\begin{multline*}
-2 \, \mbox{Im} \left(k \int_{\partial \Omega} u \frac{\partial \overline{u}}{\partial \nu_\Omega} \, d \sh^1 \right) = \\
2 \, \mbox{Im}(k) \int_{\RR^2 \setminus \overline{\Omega}} ( |k|^2 |u|^2 + |\nabla u|^2 ) \, dx +
\lim_{R \rightarrow \infty} \int_{|x|=R} ( |\partial u/\partial r|^2 + k^2 |u|^2 ) \, d \sh^1
\end{multline*}
(see e.g. equation (3.10) of \cite{CK1}); moreover the second term on the right
can be written in terms of the
far-field pattern of $u$, and it never vanishes (unless $u=0$). Specializing to $u=\psi_e$, we have
\begin{multline} \label{rellich-psi-e}
-2 \, \mbox{Im} \left(k \int_{\partial \Omega} \frac{\partial \overline{\psi}_e}{\partial \nu_\Omega} \, d \sh^1 \right) = \\
2 \, \mbox{Im}(k) \int_{\RR^2 \setminus \overline{\Omega}} (|k|^2 |\psi_e|^2 + |\nabla \psi_e|^2) \, dx +
\lim_{R \rightarrow \infty} \int_{|x|=R} ( |\partial \psi_e/\partial r|^2 + k^2 |\psi_e|^2 ) \, d \sh^1 .
\end{multline}
The function $\psi_d$ satisfies a similar relation: multiplying its PDE by $\overline{\psi}_d$ and integrating by parts gives
$$
\int_{\partial D} \frac{\partial \psi_d}{\partial \nu_D} \, d \sh^1 =
\int_D ( |\nabla \psi_d|^2 - k^2 |\psi_d|^2 ) \, dx .
$$
Taking the complex conjugate then multiplying by $k$ and
taking the imaginary part gives
\begin{equation} \label{rellich-psi-d}
\mbox{Im} \left( k \int_{\partial D} \frac{\partial \overline{\psi}_d}{\partial \nu_D} \, d \sh^1 \right) =
\mbox{Im}(k) \int_D ( |\nabla \psi_d|^2 + |k|^2 |\psi_d|^2 ) \, dx .
\end{equation}

We turn now to the assertion of the lemma. If $k$ is real then $\psi_d$ is real, and it follows from \eqref{rellich-psi-e}
that the imaginary part of $\int_{\partial \Omega} \partial \psi_e / \partial \nu_\Omega \, d \sh^1$ is nonzero. Since the other
terms in \eqref{e.c*fin} are real, $\beta \neq 0$. If $k$ is not real then the imaginary part of $k$ is positive (since
$0 \leqslant \mathrm{arg} \, k < \pi$). From \eqref{e.c*fin} we have
$$
k \overline{\beta} =  \overline{k} \, |k|^2 |\Omega \setminus \overline{D}| +
k \int_{\partial \Omega} \frac{\partial \overline{\psi}_e}{\partial \nu_\Omega} \,d \sh^1 -
k \int_{\partial D} \frac{\partial \overline{\psi}_d}{\partial \nu_D} \, d \sh^1 .
$$
Equations \eqref{rellich-psi-e} and \eqref{rellich-psi-d} show that the imaginary part of the right hand side is
strictly negative, from which it follows that $\beta \neq 0$.
\end{proof}

\subsection{Notation, conventions, and some function spaces}
Unless otherwise specified, all spaces of functions in the sequel consist of complex-valued functions.
All PDEs are to be interpreted in the weak sense, by testing with an $H^1$ function.

We will frequently need to consider the restriction to $\partial \Omega$ of a function in $H^1(\Omega)$, so let us
recall the facts we'll be using.
As the domain $\Omega$ is bounded with Lipschitz boundary, each function $f \in H^1(\Omega)$ admits a trace $\mathrm{tr}(f) \in L^2(\partial \Omega).$ This trace map is a bounded linear functional on $H^1(\Omega)$, whose kernel is the space $H^1_0(\Omega),$ and range is the fractional Sobolev space $H^{1/2}(\partial \Omega)$ (see \cite[Theorem 3.38]{McL}). The space $H^{1/2}(\partial \Omega)$ is a Hilbert space with respect to the norm defined variationally via: for any $g \in H^{1/2}(\partial \Omega),$
\begin{align}  \label{e.Hhalfdef}
    &\|g\|_{H^{1/2}(\partial \Omega)} :=
    \inf\left\{ \|\tilde{G}\|_{H^1(\Omega)} : G \in H^1(\Omega) \mbox{ and } \mathrm{tr}\, G = g\right\}.
\end{align}
For a proof of the assertion that this definition is equivalent to other definitions (say via integral kernels or by locally flattening the boundary, using a partition of unity, and appealing to a Fourier based definition within each flattened patch) of the fractional Sobolev space $H^{1/2}(\partial \Omega)$ within the setting of Lipschitz domains, we refer the reader to \cite[Section 5.1]{Kir}.
Finally, we define $H^{-1/2}(\partial \Omega)$ to be the dual space of $H^{1/2}(\partial \Omega).$ It is equivalently characterized as the space of traces of normal components $V \cdot \nu_\Omega$ of measurable vector fields $V$ on $\Omega$ such that $V \in (L^2(\Omega))^2,$ and the divergence of $V$ is square integrable, i.e., $\mathrm{div}\, V \in L^2(\Omega) $ (see \cite[Chapter 20]{TartarSobolev} for a proof).

An application of this latter equivalence that we will use repeatedly, is an estimate of the normal derivative of $s$ defined via \eqref{e.PDEs}. We let $\Psi \in C^1_c(\RR^2 \setminus \overline{\Omega})$ denote a cut off function that satisfies $\Psi \equiv 1$ in a neighborhood of $\Omega.$ Considering the vector field $V := \Psi  \nabla s ,$ from \eqref{e.PDEs} it is clear that $V \in (L^2(\RR^2\setminus \overline{\Omega}))^2,$ with $\mathrm{div} \, V \in L^2(\RR^2 \setminus \overline{\Omega}),$ has compact support, and finally, satisfies $V \cdot \nu = \frac{\partial s}{\partial \nu}.$ Therefore, by the above characterization of $H^{-1/2}(\partial \Omega),$ it follows that
\begin{align} \label{e.sbound}
\left\| \frac{\partial s}{\partial \nu_\Omega} \right\|_{H^{-1/2}(\partial \Omega)} &=
\| V \cdot \nu_\Omega\|_{H^{-1/2}(\partial \Omega)}\\  &\leqslant
C(\|V\|_{L^2} + \|\mathrm{div}\, V\|_{L^2}) \leqslant
C \|f\|_{L^2(\RR^2 \setminus \overline{\Omega})}, \nonumber
\end{align}
using the estimate
\begin{align*}
    \int_{\mathrm{spt}\, \Psi} |s|^2 + |\nabla s|^2 \,dx \leqslant C\int_{\RR^2 \setminus \overline{\Omega}} |f|^2 \,dx ,
\end{align*}
which is proved in showing the existence and uniqueness of $s$ (see for example \cite{McL}).
 \\
\subsection{A summary of our main results} \label{ss.main}
Our main theorem is
 \begin{theorem} \label{t.mt}
Suppose the dopant is non-resonant, in other words that \eqref{nonres-cond} holds. Then
there exists $\Lambda > 0$, depending only on $k$ and the sets $\Omega$ and $D$
(and \textbf{independent} of the source $f$ and the value of $\delta$) such that
if $|\delta | < \frac{1}{\Lambda},$ then the map $\delta \mapsto u_\delta \in H^1_{loc}(\RR^2;\mathbb{C})$
is complex analytic. To be precise, there exist $\{v_j\}_{j=0}^\infty \subset H^1_{loc}(\RR^2),$
independent of $\delta$, such that
 \begin{align}  \label{e.ud.exp}
     u_\delta = \sum_{j=0}^\infty \delta^j v_j \quad \quad \mbox{ in } H^1_{loc}(\RR^2),
 \end{align}
with $v_j$ satisfying: for each open set $U \subset \RR^2,$ there exists a constant $K_U > 0$ (independent of $\delta$), such that
 \begin{align*}
     \|v_j\|_{H^1(U)} \leqslant K_U \Lambda^j\left\| f \right\|_{L^2}.
 \end{align*}
 \end{theorem}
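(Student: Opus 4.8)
The plan is to reformulate \eqref{e.PDE}--\eqref{e.bcinfty} as a problem for the \emph{jump data} of $u_\delta$ across $\partial D$ and $\partial\Omega$, reducing the construction of the Taylor coefficients $v_j$ to an inductive solution of PDEs in the three regions (exterior, ENZ, dopant) coupled through transmission conditions, and then to prove convergence of the resulting series via a Neumann-series argument for a fixed-point equation whose linearization is governed by the constant $\beta$ from \eqref{e.c*fin}. Concretely, I would first observe that in the ENZ region $\Omega\setminus\overline D$ the PDE reads $-\mathrm{div}(\tfrac1\delta\nabla u_\delta)-k^2u_\delta=0$, i.e. $-\Delta u_\delta=\delta k^2 u_\delta$, so that formally $u_\delta|_{\Omega\setminus\overline D}=c_\delta+O(\delta)$ for a constant $c_\delta$; the leading term is a constant, matching the physics literature. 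The auxiliary functions $\psi_e$ (Problem 2) and $\psi_d$ (Problem 3) are exactly the harmonic-type extensions needed to propagate a boundary value $1$ outward through the exterior and inward through the dopant, while $s$ (Problem 1) carries the source. Thus I expect $v_0$ to equal $c_0\psi_e$ in the exterior (plus $s$), $c_0$ in the ENZ region, and $c_0\psi_d$ in the dopant, where $c_0$ is fixed by a solvability/flux-balance condition that produces precisely $\beta$ in the denominator.

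The key steps, in order, would be: \textbf{(1)} Set up the ansatz \eqref{e.ud.exp} and plug it into the PDE region by region. In $\Omega\setminus\overline D$ the equation $-\Delta u_\delta=\delta k^2 u_\delta$ forces, order by order, $-\Delta v_{j}=k^2 v_{j-1}$ with $v_{-1}:=0$; together with yet-to-be-determined Dirichlet data on $\partial\Omega$ and $\partial D$ this is a Poisson problem in the fixed domain $\Omega\setminus\overline D$. \textbf{(2)} In the exterior and dopant, the coefficient is $1$, so $v_j$ solves $-\Delta v_j-k^2v_j=\delta_{j0}f$ there (exterior, with the radiation condition) and $-\Delta v_j-k^2 v_j=0$ in $D$; here \eqref{nonres-cond} guarantees invertibility in $D$. \textbf{(3)} Impose the transmission conditions from \eqref{e.PDE}: continuity of $u_\delta$ and of $\tfrac1{\e_\delta}\partial_\nu u_\delta$. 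Because $\e_\delta=\delta$ in the ENZ region and $1$ elsewhere, the flux condition across $\partial\Omega$ reads $\partial_{\nu_\Omega}u_\delta^{\mathrm{ext}}=\tfrac1\delta\partial_{\nu_\Omega}u_\delta^{\mathrm{ENZ}}$, i.e. $\delta\,\partial_{\nu_\Omega}u^{\mathrm{ext}}=\partial_{\nu_\Omega}u^{\mathrm{ENZ}}$, and similarly on $\partial D$; matching powers of $\delta$ turns these into a recursive system for the traces. \textbf{(4)} Extract the scalar unknown: integrating the ENZ equation $-\Delta v_j=k^2 v_{j-1}$ over $\Omega\setminus\overline D$ and using the divergence theorem relates the net flux of $v_j$ through $\partial\Omega$ and $\partial D$ to $\int k^2 v_{j-1}$; combined with the $\psi_e,\psi_d$ representations of the outer/inner solutions, this yields a scalar equation of the form $\beta\,c_j=(\text{explicit functional of }v_0,\dots,v_{j-1}\text{ and }f)$, solvable because $\beta\neq0$ by Lemma \ref{beta-nonzero}. \textbf{(5)} Having constructed all $v_j$, prove the geometric bound $\|v_j\|_{H^1(U)}\le K_U\Lambda^j\|f\|_{L^2}$ by induction, using elliptic estimates for the Poisson problem in $\Omega\setminus\overline D$, the bounded solution operators for Problems 1--3 (and the estimate \eqref{e.sbound} for $\partial_\nu s$), and the fixed lower bound $|\beta|>0$; the loss of one power per step is absorbed into $\Lambda$. \textbf{(6)} Finally, sum the series: for $|\delta|<1/\Lambda$ it converges in $H^1(U)$ for every bounded $U$, hence in $H^1_{\mathrm{loc}}(\RR^2)$, and a term-by-term check shows the limit satisfies \eqref{e.PDE}--\eqref{e.bcinfty}; uniqueness of the outgoing solution identifies it with $u_\delta$, and analyticity in $\delta$ is immediate from the power series.

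I would organize the bookkeeping by introducing, once and for all, the bounded linear solution operators: $T_{\mathrm{ext}}:H^{1/2}(\partial\Omega)\to H^1_{\mathrm{loc}}(\RR^2\setminus\overline\Omega)$ sending Dirichlet data to the outgoing Helmholtz extension, $T_D:H^{1/2}(\partial D)\to H^1(D)$ the analogous operator for the dopant (well-defined by \eqref{nonres-cond}), and the Poisson solver in $\Omega\setminus\overline D$; the whole recursion then lives in the finite-dimensional-feeling space of traces on $\partial\Omega\cup\partial D$ modulo the single scalar degree of freedom tied to $\beta$. The main obstacle I anticipate is Step (4)/(5): getting the recursion into a genuinely closed form where the \emph{only} thing one ever divides by is $\beta$ (never $\delta$), and then tracking the constants carefully enough to get a \emph{uniform} geometric rate $\Lambda$ that depends only on $k,\Omega,D$ and not on $f$ or $j$ — this requires that every elliptic estimate invoked (interior and up-to-boundary regularity for the Poisson problem on the fixed Lipschitz domain $\Omega\setminus\overline D$, and the a priori bounds for $T_{\mathrm{ext}},T_D$) be applied with constants that do not degenerate as $j\to\infty$. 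A secondary subtlety is making the formal transmission-condition matching rigorous in the weak ($H^1$-testing) formulation, since the flux conditions are only meaningful in $H^{-1/2}$; the characterization of $H^{-1/2}(\partial\Omega)$ recalled in the text, together with \eqref{e.sbound}, is precisely what makes this step legitimate.
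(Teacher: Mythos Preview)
Your proposal is correct and follows essentially the same route as the paper: derive an inductive hierarchy of correctors by matching powers of $\delta$, solve a Neumann-type Poisson problem in the ENZ region (whose solvability condition fixes the constant via division by $\beta$), propagate the resulting traces as Dirichlet data into the exterior and dopant Helmholtz problems, and close the loop via bounded solution and Dirichlet-to-Neumann operators whose composition has operator norm $\Lambda$, yielding convergence of the Neumann series for $|\delta|<1/\Lambda$. One small inconsistency: in Step~(1) you say the ENZ Poisson problem carries ``yet-to-be-determined Dirichlet data,'' but your Steps~(3)--(4) make clear (correctly, and in agreement with the paper) that it is the \emph{Neumann} data $\partial_{\nu} v_j^{\mathrm{ENZ}}=\partial_{\nu} v_{j-1}^{\mathrm{ext/dop}}$ that is prescribed there, with the scalar $c_j$ chosen to enforce the compatibility condition---just fix that wording.
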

As will be clear from the proof, we obtain a complete characterization of all the ``correctors'' $v_j$ that
appear in the statement of Theorem \ref{t.mt}. But it is the leading-order correctors that are the most important, and the
easiest to describe. Therefore we devote the remainder of this section to a self-contained description of the leading-order
correctors, and statements of some corollaries that they permit us to draw.

The zeroth order term in \eqref{e.ud.exp} is of course the easiest to describe: it is
\begin{align*}
    v_0(x) := \left\{
    \begin{array}{cc}
        c^* \psi_e + s & x \in \RR^2 \setminus \overline{\Omega}  \\
        c^* &  x \in \Omega \setminus \overline{D}\\
        c^* \psi_d & x \in D,
    \end{array}
    \right.
\end{align*}
where
\begin{equation} \label{e.cstardef}
    c^* := -\frac{1}{\beta}\int_{\partial \Omega} \frac{\partial s}{\partial \nu_\Omega} \,d \sh^1,
\end{equation}
which is finite since $\beta \neq 0$ by {Lemma \ref{beta-nonzero}}.

\begin{remark}
\label{r.mueff} As noted in the introduction, $v_0$ is constant in the ENZ region. In the physics literature, the term ``effective permeability'' $\mu_{\mathrm{eff}}$ is used for the choice of $\mu$ that would have given the same value to $v_0$ in the ENZ region without any dopant. In our setting, the value of this choice is
\begin{align} \label{e.mueff.1}
    \mu_{\mathrm{eff}} := \frac{|\Omega \setminus \overline{D}| + \int_D \psi_d \,dx }{|\Omega|}\mu .
\end{align}
Indeed, the preceding description of $\mu_{\mathrm{eff}}$ reduces to the value such that
\begin{align} \label{e.mueff.2}
  \omega^2  \mu_{\mathrm{eff}}|\Omega| + \int_{\partial \Omega} \frac{\partial \psi_e}{\partial \nu_\Omega}\,d \sh^1 = \omega^2 \mu |\Omega \setminus \overline{D}|  + \int_{\partial \Omega} \frac{\partial \psi_e}{\partial \nu_\Omega}\,d \sh^1 -  \int_{\partial D} \frac{\partial \psi_d}{\partial \nu_D}\,d \sh^1.
\end{align}
A bit of manipulation using Green's theorem, the PDE solved by $\psi_d$, our convention that $k^2 = \omega^2 \mu$, and
$$
    -\int_{\partial D} \frac{\partial \psi_d}{\partial \nu_D}\,d \sh^1 = k^2 \int_D \psi_d \,dx,
$$
reveals that \eqref{e.mueff.1} and \eqref{e.mueff.2} are equivalent.
\end{remark}

To identify the leading order electric field $\frac{1}{i \omega \e(x)} (-\partial_{x_2} u_\delta, \partial_{x_1} u_\delta , 0)$ we must discuss the $j = 1$ term in \eqref{e.ud.exp}. To do so, we introduce new correctors.
We let $\phi_0 \in H^1(\Omega \setminus \overline{D})$ denote the unique weak solution to the PDE
\begin{equation} \label{e.phizerodef} \left\{
    \begin{aligned}
    -\Delta \phi_0 &= k^2 c^* \quad \quad \mbox{ in } \Omega \setminus \overline{D} \\
     \frac{\partial \phi_0}{\partial \nu_\Omega} &= c^* \frac{\partial \psi_e}{\partial \nu_\Omega} + \frac{\partial s}{\partial \nu_\Omega} \quad \quad \mbox{ on } \partial \Omega \\
    \frac{\partial \phi_0}{\partial \nu_D} &= c^* \frac{\partial \psi_d}{\partial \nu_D} \quad \quad \mbox{ on } \partial D,\\
     \fint_{\Omega \setminus \overline{D}} & \phi_0 \,dx = 0;
    \end{aligned} \right.
\end{equation}
and we define the exterior corrector $\lambda_0 \in H^1(\RR^2 \setminus \overline{\Omega}),$
\begin{equation} \label{e.lam0defintro} \left\{
    \begin{aligned}
    -\Delta \lambda_0 &= k^2 \lambda_0 \quad \quad \mbox{ in } \RR^2 \setminus \overline{\Omega}, \\
    \lambda_0 &= \phi_0 \quad \quad \mbox{ on } \partial \Omega,\\
    \lambda_0 &\mbox{ satisfies the radiation condition (\ref{e.bcinfty}) at infinity},
    \end{aligned} \right.
\end{equation}
and the dopant corrector $\chi_0 \in H^1(D),$ given by
\begin{equation}
    \label{e.chi0defintro} \left\{
  \begin{aligned}
  -\Delta \chi_0 &= k^2 \chi_0 \quad \quad \mbox{ in } D, \\
  \chi_0 &= \phi_0 \quad \quad \mbox{ on } \partial D.
  \end{aligned}
    \right.
\end{equation}
We emphasize that the corrector $\phi_0$ in the ENZ region $\Omega \setminus \overline{D}$ solves a \textit{Poisson equation with constant right hand side}, not a Helmholtz equation. The choice of $c^*$ by \eqref{e.cstardef} exactly ensures that the problem \eqref{e.phizerodef} is well-posed; the problems defining $(\lambda_0,\chi_0)$ are also well-posed, since we have assumed that the dopant isn't resonant. Define
\begin{equation} \label{e.e0defintro}
    e_0 := -\frac{1}{\beta}\left( \int_{\partial \Omega} \frac{\partial \lambda_0}{\partial \nu_\Omega} \,d \sh^1 - \int_{\partial D} \frac{\partial \chi_0}{\partial \nu_D}\,d \sh^1\right),
\end{equation}
and set $c_\delta := c^* + \delta e_0.$ The following result captures the essential information available from the first two terms in \eqref{e.ud.exp}.
\begin{corollary} \label{c.leadord1}
Let $|\delta| < \frac{1}{\Lambda},$ where $\Lambda$ is as in Theorem \ref{t.mt}, and let $v_\delta \in H^1_{loc}(\RR^2)$ be defined via
\begin{equation}
    v_\delta(x) := \left\{
    \begin{array}{ccc}
        &c_\delta \psi_e (x)+ s (x) + \delta \lambda_0(x) & x \in \RR^2 \setminus \overline{\Omega}  \\
        &c_\delta + \delta \phi_0 & x \in \Omega \setminus \overline{D}\\
        &c_\delta \psi_d + \delta \chi_0 & x \in D.
    \end{array}
    \right.
\end{equation}
Then, for any open set $U \subset \RR^2$ with compact closure, there exists a constant $C_U > 0$ independent of $\delta$ such that
\begin{align} \label{e.quadest}
    \|u_\delta - v_\delta\|_{H^1(U)} \leqslant C_U \delta^2.
\end{align}
\end{corollary}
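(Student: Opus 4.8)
The strategy is to deduce Corollary \ref{c.leadord1} from Theorem \ref{t.mt} by \emph{identifying the first two Taylor coefficients} $v_0$ and $v_1$. Once we know that $v_\delta = v_0 + \delta v_1$, the bound \eqref{e.quadest} is just the tail estimate
\[
\|u_\delta - v_\delta\|_{H^1(U)} = \Big\| \sum_{j\geq 2} \delta^j v_j \Big\|_{H^1(U)} \leq \sum_{j\geq 2} |\delta|^j K_U \Lambda^j \|f\|_{L^2} = K_U \|f\|_{L^2}\, \frac{(\Lambda|\delta|)^2}{1 - \Lambda|\delta|} \leq C_U |\delta|^2 ,
\]
the constant being uniform once $|\delta|$ is kept away from $1/\Lambda$ (say $|\delta| \leq \tfrac{1}{2\Lambda}$, which costs nothing by enlarging $\Lambda$). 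So the whole content of the corollary is the characterization of $v_0$ and $v_1$.

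To characterize them I would insert the convergent series $u_\delta = \sum_j \delta^j v_j$ into the weak form of \eqref{e.PDE}--\eqref{e.bcinfty} and equate powers of $\delta$; by uniqueness of the Taylor expansion this is legitimate (alternatively $v_0$ and $v_1$ can be read off from the recursive construction used to prove Theorem \ref{t.mt}). Since $\e_\delta\equiv 1$ outside $\Omega$ and on $D$ but $\e_\delta = \delta$ on $\Omega\setminus\overline D$, matching powers yields, for each $j\geq 0$ (with $v_{-1}:=0$): $-\Delta v_j - k^2 v_j = f\,\mathbf{1}_{\{j=0\}}$ in $\RR^2\setminus\overline\Omega$ together with the radiation condition, $-\Delta v_j - k^2 v_j = 0$ in $D$, and $-\Delta v_j = k^2 v_{j-1}$ in $\Omega\setminus\overline D$; continuity of each $v_j$ across $\partial\Omega$ and $\partial D$; and — this is the delicate bookkeeping — continuity of $\tfrac{1}{\e_\delta}\partial_\nu u_\delta$ becomes, after expansion, the \emph{homogeneous} Neumann conditions $\partial_{\nu_\Omega} v_0\big|_{\Omega\setminus\overline D} = 0 = \partial_{\nu_D} v_0\big|_{\Omega\setminus\overline D}$ at order $\delta^0$, and the index-shifted relations $\partial_{\nu_\Omega} v_j\big|_{\Omega\setminus\overline D} = \partial_{\nu_\Omega} v_{j-1}\big|_{\RR^2\setminus\overline\Omega}$ and $\partial_{\nu_D} v_j\big|_{\Omega\setminus\overline D} = \partial_{\nu_D} v_{j-1}\big|_{D}$ for $j\geq 1$ (the factor $1/\delta$ in the coefficient is what shifts the index). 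At order $\delta^0$, $v_0$ is harmonic in $\Omega\setminus\overline D$ with vanishing Neumann data, hence constant there, say $v_0 = c^*$; matching Dirichlet values across the two interfaces and using the definitions of $s$, $\psi_e$, $\psi_d$ reproduces the stated formula for $v_0$, while $c^*$ is fixed at the \emph{next} order by the Fredholm solvability condition for the Neumann problem $-\Delta v_1 = k^2 c^*$ in $\Omega\setminus\overline D$: integrating and inserting the order-$\delta^1$ flux data, this collapses via the divergence theorem to $c^*\beta = -\int_{\partial\Omega}\partial_{\nu_\Omega} s\,d\sh^1$, i.e.\ \eqref{e.cstardef}, which is precisely the choice making \eqref{e.phizerodef} well-posed.

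For $v_1$: on $\Omega\setminus\overline D$ it solves $-\Delta v_1 = k^2 c^*$ with Neumann data $c^*\partial_{\nu_\Omega}\psi_e + \partial_{\nu_\Omega} s$ on $\partial\Omega$ and $c^*\partial_{\nu_D}\psi_d$ on $\partial D$ — i.e.\ problem \eqref{e.phizerodef} up to an additive constant $e_0$, so $v_1 = \phi_0 + e_0$ there; feeding its traces on $\partial\Omega$ and $\partial D$ into the well-posed Helmholtz problems for $v_1$ in $\RR^2\setminus\overline\Omega$ and in $D$, linearity together with \eqref{e.PDEpsiext}--\eqref{e.PDEpsidop} and \eqref{e.lam0defintro}--\eqref{e.chi0defintro} forces $v_1 = \lambda_0 + e_0\psi_e$ outside $\Omega$ and $v_1 = \chi_0 + e_0\psi_d$ on $D$. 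Finally $e_0$ is determined by the solvability condition for $-\Delta v_2 = k^2 v_1$ in $\Omega\setminus\overline D$: since $\fint_{\Omega\setminus\overline D}\phi_0 = 0$ kills the $\phi_0$ contribution, the same divergence-theorem computation gives $e_0\beta = -\big(\int_{\partial\Omega}\partial_{\nu_\Omega}\lambda_0\,d\sh^1 - \int_{\partial D}\partial_{\nu_D}\chi_0\,d\sh^1\big)$, i.e.\ \eqref{e.e0defintro}. Comparing with the formula defining $v_\delta$ and with $c_\delta = c^* + \delta e_0$ shows $v_\delta = v_0 + \delta v_1$ in each of the three regions, which completes the proof. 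The one genuinely delicate step is the expansion of the transmission conditions at $\partial D$ and $\partial\Omega$: the factor $1/\e_\delta = 1/\delta$ shifts indices, turning the leading-order flux balance into a vanishing-flux condition — which is exactly why $\phi_0$ solves a Poisson rather than a Helmholtz equation and why its free additive constant $e_0$ is legitimately undetermined until one imposes the next solvability condition.
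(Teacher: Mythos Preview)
Your proof is correct and follows essentially the same approach as the paper: identify $v_\delta = v_0 + \delta v_1$ and then bound the tail $\sum_{j\geq 2}\delta^j v_j$ geometrically using the estimates from Theorem~\ref{t.mt}. The paper's version is shorter only because it reads off $v_0$ and $v_1$ directly from the explicit formula \eqref{e.candidate} established in the proof of Theorem~\ref{t.mt}, rather than re-deriving the hierarchy by matching powers of $\delta$ as you do; you yourself note this shortcut is available.
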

Let us note that we obtain local in $H^1$ estimates only because of the unboundedness of the domain. In particular, within the ENZ region $\Omega \setminus \overline{D},$ the differences above are global, in the following sense: the estimate \eqref{e.quadest} says that $u_\delta$ is close to being a constant, where closeness is measured in terms of $H^1(\Omega \setminus \overline{D})$.
\begin{remark}
\label{r.electricfield}
A consequence of Corollary \ref{c.leadord1} is that at leading order, the electric field distribution in the ENZ region is given by
\begin{align*}
    \vec{E} := \frac{1}{i \omega} (-\partial_{x_2} \phi_0, \partial_{x_1} \phi_0, 0).
\end{align*}
Indeed, this follows, since within the TM setting of our paper, the magnetic field is $\vec{H} = (0,0,u_\delta),$ and the electric field is $\vec{E} = \frac{1}{i \omega \e_\delta(x)}(-\partial_{x_2} u_\delta, \partial_{x_1}u_\delta, 0).$ Similarly, from Corollary \ref{c.leadord1}, it is also clear that one can read off information about the electric field distribution in the other regions.
\end{remark}

Our next corollary generalizes the so-called ``Ideal Fluid Analogy'' introduced in \cite{liberal2020near}. In that paper, the authors consider the setting in which both the dielectric permittivity $\e_\delta$ and the magnetic permeability ($k^2 = \omega^2 \mu$ in the notation of the introduction), both vanish in the ENZ region. As explained in the introduction, our next corollary clarifies and complements the result in two ways: first, of course, it describes corrections to the leading order theory recalled in the introduction  due to nonzero $\delta$ (e.g., via { the presence of losses}), and second, it is not restricted to the case of vanishing magnetic permeability. In order to state this corollary, we recall the notion of the Poynting vector, as the authors in \cite{liberal2020near} formulate their ideal fluid analogy in terms of the Poynting vector. It is defined as the function $S_\delta := \frac{1}{2} \vec E_\delta \times \overline{\vec H_\delta},$ with $\overline{z}$ denoting the complex conjugate of a complex number $z \in \mathbb{C}$ . We have the following characterization of the limit  of $S_\delta$ as $\delta \to 0:$
\begin{corollary} \label{c.idealfluid}
In the ENZ set $\Omega \setminus \overline{D},$ the Poynting vector field $S_\delta$ converges in $L^2(\Omega \setminus \overline{D})$ strongly to ${S} =  \frac{\overline{c^*}}{2i \omega} \nabla \phi_0$, which satisfies the PDE system
\begin{equation} \left\{\begin{aligned}
    \nabla \cdot {S} &=- \frac{1}{2i \omega} k^2 |c^*|^2 = \frac{i\omega \mu}{2 }|c^*|^2, \quad \quad \mbox{ in } \Omega \setminus \overline{D}, \\
    \nabla \times {S} &= 0, \quad \quad \mbox{ in } \Omega \setminus \overline{D}, \\
    \nu_\Omega \cdot {S} &=  \frac{1}{2i \omega} \overline{c^*} \left( c^* \frac{\partial \psi_e}{\partial \nu_\Omega} + \frac{\partial s}{\partial \nu_\Omega} \right), \quad \quad \mbox{ on } \partial \Omega, \\
     \nu_D \cdot {S} &= \frac{1}{2i\omega}|c^*|^2 \frac{\partial \psi_d}{\partial \nu_D} \quad \quad \mbox{ on } \partial D.
        \end{aligned} \right. \end{equation}
\end{corollary}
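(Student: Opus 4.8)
The plan is to read everything off from Corollary \ref{c.leadord1}. First I would unwind the definition of the Poynting vector inside the ENZ set, where $\e_\delta\equiv\delta$. With $\vec H_\delta=(0,0,u_\delta)$ and $\vec E_\delta=\frac{1}{i\omega\delta}(-\partial_{x_2}u_\delta,\partial_{x_1}u_\delta,0)$, a one-line computation of the cross product shows that the out-of-plane component of $S_\delta$ vanishes and that, identifying the in-plane part of $S_\delta$ with a vector field on $\Omega\setminus\overline{D}$,
\begin{align*}
S_\delta\;=\;\frac{1}{2i\omega\delta}\,\overline{u_\delta}\,\nabla u_\delta\qquad\text{in }\Omega\setminus\overline{D}.
\end{align*}

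Next I would substitute the two-term expansion of Corollary \ref{c.leadord1}: in $\Omega\setminus\overline{D}$ one has $u_\delta=c_\delta+\delta\phi_0+r_\delta$, where $c_\delta=c^*+\delta e_0$ is a \emph{constant} and $\|r_\delta\|_{H^1(\Omega\setminus\overline{D})}\leqslant C|\delta|^2$ (apply the corollary with $U$ a bounded neighborhood of $\overline{\Omega}$ and restrict). Since $c_\delta$ is constant, $\nabla u_\delta=\delta\nabla\phi_0+\nabla r_\delta$, so
\begin{align*}
\frac1\delta\nabla u_\delta\;=\;\nabla\phi_0+\frac1\delta\nabla r_\delta\;\longrightarrow\;\nabla\phi_0\qquad\text{in }L^2(\Omega\setminus\overline{D}),
\end{align*}
the division by $\delta$ being harmless precisely because the corrector estimate is \emph{quadratic} in $\delta$. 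Combined with $\overline{u_\delta}\to\overline{c^*}$ in $H^1(\Omega\setminus\overline{D})$, this gives $S_\delta\to S:=\frac{\overline{c^*}}{2i\omega}\nabla\phi_0$ in $L^2(\Omega\setminus\overline{D})$; to pass to the limit in the product I would peel off the constant, writing $\overline{u_\delta}\,\tfrac1\delta\nabla u_\delta-\overline{c^*}\nabla\phi_0=\overline{c^*}\bigl(\tfrac1\delta\nabla u_\delta-\nabla\phi_0\bigr)+(\overline{u_\delta}-\overline{c^*})\,\tfrac1\delta\nabla u_\delta$, so that the first (scalar) term equals $\tfrac1\delta\overline{c^*}\nabla r_\delta=O(|\delta|)$ in $L^2$ and hence vanishes, while the second is controlled by H\"older's inequality and two-dimensional Sobolev embedding since $\overline{u_\delta}-\overline{c^*}\to0$ in every $L^p$ ($p<\infty$) and $\tfrac1\delta\nabla u_\delta$ stays bounded in $L^2$ (with, if one insists on genuine $L^2$ rather than $L^p$-for-$p<2$ convergence here, a mild higher integrability $\nabla\phi_0,\nabla r_\delta\in L^p$, $p>2$, from interior and Lipschitz-boundary elliptic regularity).

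Finally I would read off the PDE system for $S$ directly from the defining problem \eqref{e.phizerodef}. Since $S$ is a constant multiple of a gradient, $\nabla\times S=0$. Taking the divergence and using $-\Delta\phi_0=k^2c^*$ in $\Omega\setminus\overline{D}$ gives $\nabla\cdot S=\frac{\overline{c^*}}{2i\omega}\Delta\phi_0=-\frac{k^2|c^*|^2}{2i\omega}=\frac{i\omega\mu}{2}|c^*|^2$, using $k^2=\omega^2\mu$. Multiplying the Neumann conditions $\frac{\partial\phi_0}{\partial\nu_\Omega}=c^*\frac{\partial\psi_e}{\partial\nu_\Omega}+\frac{\partial s}{\partial\nu_\Omega}$ on $\partial\Omega$ and $\frac{\partial\phi_0}{\partial\nu_D}=c^*\frac{\partial\psi_d}{\partial\nu_D}$ on $\partial D$ through by $\frac{\overline{c^*}}{2i\omega}$, and using $\overline{c^*}c^*=|c^*|^2$, turns them into exactly the stated boundary conditions for $\nu_\Omega\cdot S$ and $\nu_D\cdot S$.

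The one place needing care is the middle step: $S_\delta$ is quadratic in $u_\delta$ and we divide by $\delta$, so the crude $H^1$ bounds do not by themselves deliver the limit; one must use the precise $O(|\delta|^2)$ corrector estimate of Corollary \ref{c.leadord1} and exploit that the leading ENZ value of $u_\delta$ is a \emph{constant}, which is exactly what makes the cross term negligible. The rest is vector calculus and bookkeeping.
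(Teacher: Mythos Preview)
Your proposal is correct and follows essentially the same route as the paper: compute $S_\delta$ from the definitions, use the two-term expansion of Corollary~\ref{c.leadord1} (equivalently Remark~\ref{r.electricfield}) to pass to the limit, and read off the PDE system for $S$ from the Neumann problem \eqref{e.phizerodef} defining $\phi_0$. The paper's proof is only a few lines and invokes exactly these ingredients without spelling out the product argument; you have filled in that step carefully and, in particular, correctly flagged that genuine $L^2$ convergence of the bilinear term $(\overline{u_\delta}-\overline{c^*})\tfrac{1}{\delta}\nabla u_\delta$ needs a touch of higher gradient integrability beyond $L^2$, which the paper's sketch does not make explicit.
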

The ``ideal fluid analogy'' explored in \cite{liberal2020near} rests on the observation that if $\mu=0$ in the ENZ region then $\nabla \cdot S = 0$ and $\nabla \times S = 0$ there (so the real and imaginary parts of $S$ are the gradients of harmonic functions). Corollary \ref{c.idealfluid} provides a simple extension to our setting, which applies for any value of $\mu.$ In particular, the real and imaginary parts of $S$ in the ENZ region are the gradients of functions satisfying Poisson-type equations of the form $\Delta W = \mbox{constant},$ where the right-hand-side is the real or imaginary part of $\tfrac12 i \omega \mu |c^*|^2. $

\subsection{Main ingredients of the proof} \label{ss.mainidea}
The main idea in the proof of Theorem \ref{t.mt} and its corollaries is to pursue a perturbative ansatz of the form
\begin{align} \label{e.expansion}
    u_\delta = \sum_{j=0}^\infty \delta^j v_j,
\end{align}
and plug it into the PDE \eqref{e.PDE}. When $|\delta| \ll 1$, we expect that $u_\delta$ is essentially constant in the ENZ region. To this end,  by linearity of the PDE \eqref{e.PDE}, one seeks
\begin{align*}
    v_0 := \left\{
    \begin{array}{cc}
     c \psi_e + s    & \mbox{ in } \RR^2 \setminus \overline{\Omega}\\
    c     &  \mbox{ in } \Omega \setminus \overline{D}\\
    c \psi_d & \mbox{ in } D,
    \end{array}
    \right.
\end{align*}
for an unknown constant $c,$ where the functions $s,\psi_e,$ and $\psi_d$ are as defined via \eqref{e.PDEs}, \eqref{e.PDEpsiext} and \eqref{e.PDEpsidop}. Observing that continuity of $v_0$ across $\partial \Omega$ and $\partial D$ is enforced by construction, it remains to determine the unknown constant $c.$ The value of this constant is determined at the next order. Indeed, plugging in \eqref{e.expansion} into \eqref{e.PDE}, and matching the terms at $O(\delta)$ and enforcing continuity of $\e_\delta^{-1}\frac{\partial u_\delta}{\partial \nu_\Omega}$ (respectively $\e_\delta^{-1} \frac{\partial u_\delta}{\partial \nu_D})$ at the interface $\partial \Omega$ (respectively $\partial D$),  one arrives at the system \eqref{e.phizerodef} for $\phi_0 := v_1|_{\Omega \setminus \overline{D}}$ in the ENZ region, and the systems \eqref{e.lam0defintro} and \eqref{e.chi0defintro} respectively for the restrictions $\lambda_0$ and $\chi_0$ of the corrector $v_1$ in the regions $\RR^2 \setminus \overline{\Omega}$ and $D$ respectively.
{We note}
that the PDE \eqref{e.phizerodef} is a Poisson equation, and the value $c = c^*$ is exactly the unique choice of the constant $c$ that makes this problem consistent. The choice of the boundary conditions for $\phi_0$ ensure continuity of $\e_\delta^{-1} \frac{\partial v_0}{\partial \nu}$ across both the interfaces, and those for $\lambda_0$ and $\chi_0$ yield continuity of $v_1$ across the interfaces. The subsequent order correctors are then defined inductively, where at each stage, the constant $c^*$ is corrected by suitable constants $\delta^j e_j,$ that render the Poisson problems in the ENZ region consistent, providing the Dirichlet data for the exterior and dopant Helmholtz problems.

The bulk of the proof lies in setting up the induction procedure above, and arguing that the system for the infinite system of correctors does indeed close, with bounds sufficient to ensure analyticity. The key point, as one might naturally expect in the context of divergence form PDEs with piecewise constant coefficients, is that the regions $\Omega \setminus \overline{D}$, $D$ and $\RR^2 \setminus \overline{\Omega}$ interact with each other through their Dirichlet-to-Neumann maps.

\section{An Operator Theoretic Set-up} \label{s.opt}
We introduce various operators that will {be used in the proof} of our main result.

\subsection{Dirichlet-to-Neumann operators outside the ENZ region}
Let $\mathcal{D}^e_k$ and $\mathcal{D}^d_k$ denote the Dirichlet-to-Neumann operators associated with the Helmholtz operator $-\Delta - k^2$ in the exterior and in the dopant regions respectively. To be precise, given $g_e \in H^{1/2}(\partial \Omega)$ (resp. $g_d \in H^{1/2}(D)$), define these operators via
\begin{equation*} \left\{
    \begin{aligned}
     -\Delta u_e &= k^2 u_e \quad \mbox{ in } \RR^2 \setminus \overline{\Omega}, \\
     u_e &= g_e  \quad \mbox{ on } \partial \Omega, \\
     u_e &\mbox{ satisfies the radiation condition (\ref{e.bcinfty}) at infinity }\\
      \mathcal{D}^e_k (g_e) &:= \frac{\partial u_e}{\partial \nu_\Omega} \in H^{-1/2}(\partial \Omega),
    \end{aligned} \right.
\end{equation*}
and
\begin{equation*} \left\{
    \begin{aligned}
     -\Delta u_d &= k^2 u_d \quad \mbox{ in } D, \\
     u_d &= g_d  \quad \mbox{ on } \partial D, \\
       \mathcal{D}^d_k (g_d) &:= \frac{\partial u_d}{\partial \nu_D} \in H^{-1/2}(\partial D),
    \end{aligned} \right.
\end{equation*}
respectively. A basic fact that we will use from \cite{CK1,CK2,McL} is
\begin{lemma} \label{l.dtn}
 Under the assumptions in Section \ref{s.asspt}, the operators $\mathcal{D}^e_k$ and $\mathcal{D}^d_k$ define bounded linear operators: to be precise, there exist constants $C_k^e, C_k^d$ such that
\begin{align*}
    &\|\mathcal{D}^e_k(g_e)\|_{H^{-1/2}(\partial \Omega )} \leqslant C_k^e \|g_e\|_{H^{1/2}(\partial \Omega)}, \\
    & \|\mathcal{D}^d_k(g_d)\|_{H^{-1/2}(\partial D )} \leqslant C_k^d \|g_d\|_{H^{1/2}(\partial D)},
\end{align*}
\end{lemma}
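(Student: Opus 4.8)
The plan is to establish each of the two estimates by combining the well-posedness of the corresponding boundary value problem with the trace inequality and, in the exterior case, with interior elliptic estimates near the boundary. I will treat $\mathcal{D}^d_k$ first, since it is the cleaner of the two. Given $g_d \in H^{1/2}(\partial D)$, the nonresonance hypothesis \eqref{nonres-cond} together with Fredholm theory (exactly as invoked for $\psi_d$ in Section \ref{s.asspt}) yields a unique $u_d \in H^1(D)$ solving the stated Dirichlet problem, with the bound $\|u_d\|_{H^1(D)} \leqslant C \|g_d\|_{H^{1/2}(\partial D)}$; this last inequality follows by lifting $g_d$ to a function $G \in H^1(D)$ with $\|G\|_{H^1(D)}$ nearly minimal (using the variational definition \eqref{e.Hhalfdef}), writing $u_d = G + w$ with $w \in H^1_0(D)$, and applying the Fredholm bound for $-\Delta - k^2$ to $w$. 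To control $\partial u_d/\partial \nu_D$ in $H^{-1/2}(\partial D)$, I would use the characterization of $H^{-1/2}(\partial D)$ recalled in the excerpt (the one used to prove \eqref{e.sbound}): the vector field $V := \nabla u_d$ lies in $(L^2(D))^2$ and has $\mathrm{div}\, V = -k^2 u_d \in L^2(D)$, so $\|\partial u_d/\partial \nu_D\|_{H^{-1/2}(\partial D)} = \|V \cdot \nu_D\|_{H^{-1/2}(\partial D)} \leqslant C(\|V\|_{L^2} + \|\mathrm{div}\, V\|_{L^2}) \leqslant C(1 + k^2)\|u_d\|_{H^1(D)}$. Chaining the two displayed inequalities gives the claimed bound with $C_k^d$ depending only on $k$ and $D$.

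For $\mathcal{D}^e_k$ the structure is the same, but the domain $\RR^2 \setminus \overline{\Omega}$ is unbounded, so one cannot take $V = \nabla u_e$ globally (it need not be in $L^2$ nor have $L^2$ divergence on the whole exterior). The remedy is the cutoff trick already used for \eqref{e.sbound}: fix $\Psi \in C^1_c(\RR^2 \setminus \overline{\Omega})$ with $\Psi \equiv 1$ in a neighborhood of $\partial \Omega$, and set $V := \Psi \nabla u_e$. Then $V$ has compact support, $V \in (L^2)^2$, $\mathrm{div}\, V = \Psi(-k^2 u_e) + \nabla \Psi \cdot \nabla u_e \in L^2$, and $V \cdot \nu_\Omega = \partial u_e/\partial \nu_\Omega$ on $\partial\Omega$, so $\|\partial u_e/\partial \nu_\Omega\|_{H^{-1/2}(\partial\Omega)} \leqslant C(\|u_e\|_{L^2(\mathrm{spt}\,\Psi)} + \|\nabla u_e\|_{L^2(\mathrm{spt}\,\Psi)})$. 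It remains to bound $\|u_e\|_{H^1(\mathrm{spt}\,\Psi)}$ by $\|g_e\|_{H^{1/2}(\partial\Omega)}$. This is precisely the local-in-$H^1$ estimate that is part of the standard well-posedness theory for the exterior Helmholtz problem with the Sommerfeld radiation condition (e.g. \cite[Theorem 9.11]{McL}, or \cite{CK1,CK2}): on any fixed bounded neighborhood of $\overline{\Omega}$ the solution operator $g_e \mapsto u_e$ is bounded from $H^{1/2}(\partial\Omega)$ into $H^1$ of that neighborhood. Combining the two gives $\|\mathcal{D}^e_k(g_e)\|_{H^{-1/2}(\partial\Omega)} \leqslant C_k^e \|g_e\|_{H^{1/2}(\partial\Omega)}$.

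The only genuinely nontrivial input is the local $H^1$ bound for the exterior problem, $\|u_e\|_{H^1(U)} \leqslant C_U\|g_e\|_{H^{1/2}(\partial\Omega)}$ for bounded $U \supset \overline{\Omega}$; everything else is bookkeeping with the trace theorem, the variational norm \eqref{e.Hhalfdef}, and the distributional-divergence characterization of $H^{-1/2}$. This bound is, however, exactly what the cited references \cite{CK1,CK2,McL} provide: uniqueness for the exterior Dirichlet problem with the radiation condition holds for all $k$ with $0 \leqslant \arg k < \pi$ (Rellich's lemma, as used in the proof of Lemma \ref{beta-nonzero}), existence is obtained by boundary integral equations, and the resulting solution operator is bounded into $H^1_{loc}$ with a closed-graph / open-mapping argument. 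So the main obstacle is purely expository — deciding how much of this standard machinery to import by citation versus reproduce — rather than mathematical. I would therefore state the lemma's proof as: invoke well-posedness of Problems of the type \eqref{e.PDEpsiext} with general $H^{1/2}$ data to get the interior $H^1$ control of $u_e$ (and the Fredholm bound for $u_d$), then apply the cutoff-plus-$H^{-1/2}$-duality argument verbatim as in the derivation of \eqref{e.sbound} to pass from $\|u\|_{H^1}$ to $\|\partial u/\partial\nu\|_{H^{-1/2}}$.
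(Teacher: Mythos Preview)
Your proposal is correct. The paper does not actually supply a proof of this lemma: it is stated as ``a basic fact that we will use from \cite{CK1,CK2,McL}'' and left at that. Your argument is exactly the natural one consistent with the paper's own toolkit---Fredholm well-posedness in $D$ under \eqref{nonres-cond}, the local-in-$H^1$ well-posedness of the exterior radiating problem from \cite{McL}, and then the same $H^{-1/2}$ trace bound via the divergence characterization and the cutoff $\Psi$ already used to derive \eqref{e.sbound}. So you have filled in what the authors deliberately outsourced to citation, and done so using precisely the ingredients they had on the table; there is nothing to correct and no genuine divergence in approach to discuss.
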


With this definition at hand, by a convenient abuse of notation, we will write
\begin{align*}
    \int_{\partial \Omega} \mathcal{D}^e_k(g)\,d \sh^1 := \langle \mathcal{D}^e_k(g) , \mathbf{1} \rangle_{H^{-1/2}(\partial \Omega), H^{1/2}(\partial \Omega)},
\end{align*}
where $\mathbf{1}(x) \equiv 1$ for all $x \in \partial \Omega$
and, we use a similar convention at $\partial D.$ By duality,
\begin{equation} \label{e.avgbnd}
    \begin{aligned}
    &\left| \int_{\partial \Omega} \mathcal{D}^e_k(g_e)\,d \sh^1 \right| \leqslant C_k^e \|g_e\|_{H^{1/2}(\partial \Omega)} \|\mathbf{1}\|_{H^{1/2}(\partial \Omega)}, \\
    &\left| \int_{\partial D} \mathcal{D}^d_k(g_d)\,d \sh^1 \right| \leqslant C_k^d \|g_d\|_{H^{1/2}(\partial D)} \|\mathbf{1}\|_{H^{1/2}(\partial D)}.
    \end{aligned}
\end{equation}
A tensorized version of $\mathcal{D}_k^e,\mathcal{D}_k^d$ is easily defined via: given $(g_e,g_d) \in H^{1/2}(\partial \Omega) \times H^{1/2}(\partial D),$ set
\begin{align*}
    \mathcal{D}_k(g_e,g_d) := (\mathcal{D}_k^e(g_e), \mathcal{D}_k^d(g_d)).
\end{align*}
\subsection{An averaging operator} For any $(h_e,h_d) \in H^{-1/2}(\partial \Omega) \times H^{-1/2}(\partial D),$ we define
\begin{align*}
    \mathcal{A}(h_e,h_d) :=  -\frac{1}{\beta} \left( \int_{\partial \Omega} h_e\,d \sh^1 -  \int_{\partial D} h_d\,d \sh^1 \right) \in \mathbb{C},
\end{align*}
recalling that $\beta \neq 0$ by Lemma \ref{beta-nonzero}.
\begin{lemma} \label{l.avg}
The functional $\mathcal{A}: H^{1/2}(\partial \Omega) \times H^{1/2}(\partial D) \to \mathbb{C}$ is a linear map that is bounded in the sense that there exists a constant $C_{\mathcal{A}} > 0$ with
\begin{align}
    |\mathcal{A}(h_e,h_d)| \leqslant C_{\mathcal{A}} (\|h_e\|_{H^{-1/2}(\partial \Omega)} + \|h_d\|_{H^{-1/2}(\partial D)} ).
\end{align}
\end{lemma}
\begin{proof}
The desired estimate follows easily from \eqref{e.avgbnd} and the triangle inequality.
\end{proof}

\subsection{The solution operators}
We introduce solution operators in the various regions. In order to define this operator, we let $\zmh(\Omega \setminus \overline{D})$ denote the set of square integrable functions in $\Omega \setminus \overline{D},$ whose average is zero: precisely,
$$
\zmh(\Omega \setminus \overline{D}) := \left\{f \in L^2(\Omega \setminus \overline{D}): \int_{\Omega \setminus \overline{D}} f = 0\right\},
$$
and
\begin{align*}
    \underline{H}^1(\Omega \setminus \overline{D}) := \left\{f \in H^1(\Omega \setminus \overline{D}): \int_{\Omega \setminus \overline{D}} f = 0\right\}\,.
\end{align*}

We define the solution operator in the ENZ region, denoted by $\mathcal{P}_k: \zmh(\Omega \setminus \overline{D}) \times H^{-1/2} (\partial \Omega) \times H^{-1/2}(\partial D) \to  \underline{H}^1(\Omega \setminus \overline{D})$ by
\begin{align*}
    \mathcal{P}_k(g,h_e,h_d) := \hat\phi ,
\end{align*}
where $\hat\phi = \hat\phi(g,h_e,h_d)$ is the unique $H^1$ weak solution to the boundary value problem
\begin{equation} \label{e.Skopdef} \left\{
    \begin{aligned}
   -\Delta \hat\phi &= k^2 \mathcal{A}(h_e,h_d) + k^2 g, \quad \quad \mbox{ in } \Omega \setminus \overline{D},\\
      \frac{\partial \hat\phi}{\partial \nu_\Omega} &= \mathcal{A}(h_e,h_d) \frac{\partial \psi_e}{\partial \nu} + h_e  \quad \quad \mbox{ on } \partial \Omega, \\
      \frac{\partial \hat\phi}{\partial \nu_D} &= \mathcal{A}(h_e,h_d) \frac{\partial \psi_d}{\partial \nu_D} + h_d, \quad \quad \mbox{ on } \partial D, \\
      \fint_{\Omega \setminus \overline{D}} & \hat\phi = 0.
    \end{aligned} \right.
\end{equation}
The choice of the constant $\mathcal{A}(h_e,h_d)$ is exactly such that this system is consistent.

In order to define correctors in the exterior and in the dopant region, we define the Helmholtz solution operators $\mathcal{H}_k^e : H^{1/2}(\partial\Omega) \to H^1_{loc}(\RR^2 \setminus \overline{\Omega})$, and respectively, $\mathcal{H}_k^d : H^{1/2}(\partial D) \to H^1(D),$ via
\begin{equation} \label{e.Hedef} \left\{
    \begin{aligned}
    -\Delta v &= k^2 v \quad \quad \mbox{ in } \RR^2 \setminus \overline{\Omega}\\
    v &= g_e \in H^{1/2}(\partial \Omega), \\
     v &\mbox{ satisfies the radiation condition (\ref{e.bcinfty}) at infinity}\\
     \mathcal{H}_k^e&(g_e) := v,
    \end{aligned} \right.
\end{equation}
and respectively,
\begin{equation} \label{e.Hddef} \left\{
    \begin{aligned}
    -\Delta v &= k^2 v \quad \quad \mbox{ in } D\\
    v &= g_d \in H^{1/2}(\partial D),\\
     \mathcal{H}_k^d&(g_d) := v,
    \end{aligned} \right.
\end{equation}
That $\mathcal{H}^d_k$ is well-defined is guaranteed by our assumption \eqref{nonres-cond} that the dopant isn't resonant.
As usual, we tensorize this to define $\mathcal{H}_k : H^{1/2}(\partial \Omega)\times H^{1/2}(\partial D) \to H^{1}_{loc}(\RR^2 \setminus \overline{\Omega})\times H^{1}( D)$ via
\begin{align}
    \mathcal{H}_k(g_e,g_d) = (\mathcal{H}_k^e(g_e), \mathcal{H}_k^d(g_d)).
\end{align}
\begin{lemma} \label{l.extbd}
The operator $\mathcal{P}_k : \zmh(\Omega \setminus \overline{D}) \times H^{-1/2} (\partial \Omega) \times H^{-1/2}(\partial D) \to H^1(\Omega \setminus \overline{D}),$ and $\mathcal{H}_k : H^{1/2}(\partial \Omega) \times H^{1/2}(D) \to H^1_{loc}(\RR^2 \setminus \overline{\Omega}) \times H^1(D)$ are both bounded linear operators between the respective Hilbert spaces.
\end{lemma}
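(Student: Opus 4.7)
The plan is to treat the three operators $\mathcal{H}_k^e$, $\mathcal{H}_k^d$, and $\mathcal{P}_k$ separately, since the boundedness of the tensorized operators $\mathcal{H}_k$ and $\mathcal{P}_k$ follows directly from that of their building blocks. Each of the three reduces to a standard well-posedness result for a linear elliptic boundary value problem in a Lipschitz domain, and the corresponding norm bound is then a consequence of the open mapping theorem (or, more constructively, of an \emph{a priori} estimate obtained by Lax--Milgram after lifting the boundary data).

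First I would dispatch $\mathcal{H}_k^e$: the exterior Dirichlet Helmholtz problem with the Sommerfeld radiation condition admits a unique variational solution, and the mapping from Dirichlet data $g_e \in H^{1/2}(\partial \Omega)$ to the solution $v \in H^1_{\mathrm{loc}}(\RR^2 \setminus \overline{\Omega})$ is continuous for every compact set $U$; this is exactly \cite[Theorem 9.11]{McL} (and is already implicitly invoked in defining $\psi_e$ and in Lemma \ref{l.dtn}). For $\mathcal{H}_k^d$, the nonresonance hypothesis \eqref{nonres-cond} is precisely the Fredholm alternative's invertibility condition for the interior Dirichlet Helmholtz problem. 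Concretely, I would lift $g_d$ to some $G_d \in H^1(D)$ with $\|G_d\|_{H^1(D)} \leqslant C \|g_d\|_{H^{1/2}(\partial D)}$ by the trace theorem and seek $v - G_d \in H^1_0(D)$; the resulting problem is a compact perturbation of the Laplacian and hence Fredholm of index zero, so \eqref{nonres-cond} gives invertibility and a continuous inverse. Boundedness of the tensor $\mathcal{H}_k$ is then immediate.

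The substantive step is the boundedness of $\mathcal{P}_k$. For fixed data $(g, h_e, h_d)$, system \eqref{e.Skopdef} is a pure Neumann problem for $-\Delta$ on the bounded Lipschitz domain $\Omega \setminus \overline{D}$, normalized by zero mean. Its compatibility condition, namely that the integral of the volume source equal the sum of the boundary fluxes, reduces after a short computation using $-\int_{\partial D} \partial_{\nu_D} \psi_d \, d\sh^1 = k^2 \int_D \psi_d \, dx$ and the defining relation $\int_{\partial \Omega} \partial_{\nu_\Omega} \psi_e \, d\sh^1 - \int_{\partial D} \partial_{\nu_D} \psi_d \, d\sh^1 = \beta - k^2 |\Omega \setminus \overline{D}|$ to exactly the identity $\beta \, \mathcal{A}(h_e,h_d) = -\int_{\partial \Omega} h_e \, d\sh^1 + \int_{\partial D} h_d \, d\sh^1$, which is the definition of $\mathcal{A}$. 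Thus the problem is consistent by construction, and I would apply Lax--Milgram on $\underline{H}^1(\Omega \setminus \overline{D})$ equipped with the Dirichlet inner product, which is coercive thanks to the Poincar\'e--Wirtinger inequality on mean-zero functions in a connected Lipschitz domain.

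What remains is to show that the right-hand-side is a bounded antilinear functional on $\underline{H}^1(\Omega \setminus \overline{D})$ with norm controlled linearly by $\|g\|_{L^2} + \|h_e\|_{H^{-1/2}(\partial \Omega)} + \|h_d\|_{H^{-1/2}(\partial D)}$. The volume terms are handled by Cauchy--Schwarz together with Lemma \ref{l.avg} bounding $|\mathcal{A}(h_e,h_d)|$ by the data. The boundary terms are handled by the $H^{1/2}$ trace inequality, using Lemma \ref{l.avg} again and the fact that $\partial_{\nu_\Omega} \psi_e \in H^{-1/2}(\partial \Omega)$ and $\partial_{\nu_D} \psi_d \in H^{-1/2}(\partial D)$ are fixed functions with norms depending only on $k,\Omega, D$. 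Combining these estimates with the coercivity constant yields the desired bound $\|\mathcal{P}_k(g,h_e,h_d)\|_{H^1(\Omega \setminus \overline{D})} \leqslant C(\|g\|_{L^2} + \|h_e\|_{H^{-1/2}(\partial \Omega)} + \|h_d\|_{H^{-1/2}(\partial D)})$. The main subtlety, and essentially the only nontrivial point, is verifying the compatibility identity above; everything else is bookkeeping with standard trace and Poincar\'e estimates.
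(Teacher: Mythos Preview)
Your proposal is correct. The paper's own proof consists solely of two citations to McLean's book (\cite[Theorem 8.19]{McL} for $\mathcal{P}_k$ and \cite[Theorem 9.11 and p.~286]{McL} for $\mathcal{H}_k$), so your argument is essentially an unpacking of those references: the exterior Dirichlet Helmholtz problem with radiation condition, the interior Dirichlet problem under the Fredholm (nonresonance) hypothesis, and the Neumann Poisson problem on the annulus via Lax--Milgram on mean-zero $H^1$. The one thing you supply that the paper leaves implicit is the explicit verification of the compatibility condition for \eqref{e.Skopdef}; the paper only remarks that ``the choice of the constant $\mathcal{A}(h_e,h_d)$ is exactly such that this system is consistent,'' and your computation (which is correct, and in fact does not even need the identity $-\int_{\partial D}\partial_{\nu_D}\psi_d = k^2\int_D\psi_d$ you mention, only the definition of $\beta$) makes this precise.
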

\begin{proof}
See \cite[Theorem 8.19]{McL} for the assertion regarding $\mathcal{P}_k$, and \cite[Theorem 9.11 and page 286]{McL} for the statement regarding $\mathcal{H}_k.$
\end{proof}

Denoting the boundary trace operator for the ENZ region, $\Omega \setminus \overline{D}$ by $\mathcal{T},$ it then follows that the composition  $\mathcal{T}\mathcal{P}_k: \zmh(\Omega \setminus \overline{D}) \times H^{-1/2}(\partial \Omega)\times H^{-1/2}(\partial D) \to H^{1/2}(\partial\Omega) \times H^{1/2}(\partial D)$ is a bounded linear operator. The main object of our proof is the iteration map $\mathcal{I}_k: \underline{H}^1(\Omega \setminus \overline{D}) \times H^{-1/2}(\partial \Omega) \times H^{-1/2} (\partial D) \to \underline{H}^1(\Omega \setminus \overline{D}) \times H^{-1/2}(\partial \Omega) \times H^{-1/2} (\partial D)$ defined via
\begin{align} \label{e.Ikdef}
   & (g,h_e,h_d) \in \underline{H}^1(\Omega \setminus \overline{D}) \times H^{-1/2}(\partial \Omega) \times H^{-1/2} (\partial D) \\ \notag &\quad \quad \quad \quad \quad \mapsto \mathcal{I}_k(g,h_e,h_d) := (\mathcal{P}_k (g,h_e,h_d) , \mathcal{D}_k \mathcal{T} \mathcal{P}_k (g,h_e,h_d)).
\end{align}
In particular, $\mathcal{I}_k$ is a bounded linear operator.

Recall the number $c^*$ defined in \eqref{e.cstardef}, and the leading order correctors $(\phi_0, \lambda_0,\chi_0)$ defined in \eqref{e.phizerodef}, \eqref{e.lam0defintro}, and \eqref{e.chi0defintro} respectively. We note that
\begin{align} \label{e.c*A}
    c^* = \mathcal{A} \left( \frac{\partial s}{\partial \nu_\Omega}, 0 \right),
   \end{align}

\begin{equation} \label{e.phi0def}
     \phi_0 := \mathcal{P}_k \left(0,\frac{\partial s}{\partial \nu_\Omega} , 0\right),
\end{equation}
     and that

   \begin{equation} \label{e.lam0chi0def}
     \begin{aligned}
     (\lambda_0,\chi_0) = \mathcal{H}_k\mathcal{T}  \mathcal{P}_k \left( 0,\frac{\partial s}{\partial \nu_\Omega}, 0 \right).
     \end{aligned}
 \end{equation}
 Finally, we note
 \begin{equation} \label{e.zeronormalder}
     \begin{aligned}
     \left(\frac{\partial \lambda_0}{\partial \nu_\Omega}, \frac{\partial \chi_0}{\partial \nu_D} \right) = \mathcal{D}_k \mathcal{T}\mathcal{P}_k \left( 0,\frac{\partial s}{\partial \nu_\Omega}, 0 \right),
     \end{aligned}
 \end{equation}
 so that the number $e_0$ defined in \eqref{e.e0defintro} satisfies
 \begin{equation} \label{e.e0opt}
     \begin{aligned}
      e_0 = \mathcal{A} \left( \mathcal{D}_k \mathcal{T}\mathcal{P}_k\left( 0,\frac{\partial s}{\partial \nu_\Omega}, 0\right)\right).
     \end{aligned}
 \end{equation}

 \section{Proofs of the Main Theorem and its Corollaries} \label{s.proofs}
 \begin{proof}[Proof of Theorem \ref{t.mt}]
 We proceed in several steps, deriving recursive definitions of higher order correctors to all orders. Before setting up the induction step, it is convenient to define
 \begin{align}
     \label{e.basecase}
     \phi_{-1}\equiv  0, \, \lambda_{-1} := s,\,  \chi_{-1} \equiv 0, \, \mbox{ and } e_{-1} := c^* = \mathcal{A}\left(\frac{\partial s}{\partial \nu_\Omega},0 \right),
 \end{align}
 and recall the definitions for $(\phi_0, \lambda_0, \chi_0, e_0)$ from \eqref{e.phi0def}--\eqref{e.e0opt}.\\

\textbf{1.} In this step, we define a hierarchy of correctors. For each $j = 1,2,\ldots,$ we inductively define,

\begin{equation}\label{e.highcorsys}
    \begin{aligned}
    \phi_j &:= \mathcal{P}_k \left( \phi_{j-1}, \frac{\partial \lambda_{j-1}}{\partial \nu_\Omega}, \frac{\partial \chi_{j-1}}{\partial \nu_D} \right), \\
    (\lambda_j, \chi_j) &= \mathcal{H}_k\mathcal{T}\mathcal{P}_k \left( \phi_{j-1},\frac{\partial \lambda_{j-1}}{\partial \nu_\Omega}, \frac{\partial \chi_{j-1}}{\partial \nu_D} \right),
    \end{aligned}
\end{equation}
and so, observing that
\begin{equation}
    \label{e.normderj}
    \left( \frac{\partial \lambda_j}{\partial \nu_\Omega}, \frac{\partial \chi_j}{\partial \nu_D} \right) = \mathcal{D}_k \mathcal{T}\mathcal{P}_k \left( \phi_{j-1}, \frac{\partial \lambda_{j-1}}{\partial \nu_\Omega}, \frac{\partial \chi_{j-1}}{\partial \nu_D} \right),
\end{equation}
we set \begin{align} \label{e.ejdef}
    e_j := -\frac{1}{\beta} \left[ \int_{\partial \Omega} \frac{\partial \lambda_{j}}{\partial \nu_\Omega} \,d \sh^1 - \int_{\partial D} \frac{\partial \chi_{j}}{\partial \nu_D}\,d \sh^1 \right] = \mathcal{A} \left( \frac{\partial \lambda_{j}}{\partial \nu_\Omega}, \frac{\partial \chi_{j}}{\partial \nu_D}\right), \quad \quad j \in \mathbb{N}   ,
\end{align}
where we recall that, by \eqref{e.c*fin}, that $\beta  \neq 0.$
\noindent
Therefore, by definition of $\mathcal{I}_k$, and the base case \eqref{e.basecase}, it follows that
\begin{align} \label{e.Ik}
   \left(\phi_j, \frac{\partial \lambda_j}{\partial \nu_\Omega},\frac{\partial \chi_j}{\partial \nu_D}\right) = \mathcal{I}_k^{j+1}\left(0, \frac{\partial s}{\partial \nu_\Omega} , 0\right), \quad j \in \mathbb{N}.
\end{align}

\noindent
 By induction, \eqref{e.zeronormalder}, and \eqref{e.normderj}, and \eqref{e.Ik},
\begin{equation*}
\begin{aligned}
 & e_j = \mathcal{A} \left( \frac{\partial \lambda_j}{\partial \nu_\Omega}, \frac{\partial \chi_j}{\partial \nu_D}\right) \\
 &\quad = \mathcal{A} \left( \mathcal{D}_k \mathcal{T}\mathcal{P}_k\left(\phi_{j-1},\frac{\partial \lambda_{j-1}}{\partial \nu_\Omega}, \frac{\partial \chi_{j-1}}{\partial \nu_D}\right) \right)  = \mathcal{A} \left( \mathcal{D}_k \mathcal{T}\mathcal{P}_k \mathcal{I}_k^j \left(0, \frac{\partial s}{\partial \nu_\Omega}, 0 \right) \right) .
\end{aligned}
\end{equation*}
Similarly, combining \eqref{e.highcorsys} and \eqref{e.Ik},
\begin{equation} \label{e.highcorrrec}
    \begin{aligned}
    \phi_j  &= \mathcal{P}_k \mathcal{I}_k^j \left( 0,\frac{\partial s}{\partial \nu_\Omega}, 0\right),\\
    (\lambda_j, \chi_j) &= \mathcal{H}_k \mathcal{T}\mathcal{P}_k \mathcal{I}_k^j \left( 0, \frac{\partial s}{\partial \nu_\Omega}, 0\right).
    \end{aligned}
\end{equation}
As always, in the foregoing equations, an exponent on an operator means repeated composition.\\

\textbf{2.} In this step, we obtain bounds on the correctors constructed in Step 1. By Lemmas \ref{l.dtn} and \eqref{l.extbd},  $\mathcal{I}_k$ defined in \eqref{e.Ikdef}  is a bounded linear operator; we let $\Lambda$ denote its operator norm.  By Lemmas \ref{l.avg} and  \eqref{e.sbound} we find  \footnote{Here, for two sequences $\{a_j\}_j, \{b_j\}_j \subset \RR,$ we use the notation $a_j \lesssim b_j$ to mean $a_j \leqslant C b_j, j \in \mathbb{N},$ for a universal constant $C$ that is independent of $j.$}
\begin{align}  \label{e.ejest}
    |e_j| \lesssim \Lambda^j \left\| \frac{\partial s}{\partial \nu_\Omega}\right\|_{H^{-1/2}(\partial \Omega)} \lesssim \Lambda^{j} \|f\|_{L^2}.
\end{align}
Similarly, from \eqref{e.highcorrrec}, and Lemmas \ref{l.dtn}, \ref{l.extbd}, and \eqref{e.sbound}, we estimate
\begin{equation}\begin{aligned} \label{e.jcorrest}
&\|\phi_j\|_{H^1(\Omega \setminus \overline{D})} \lesssim  \Lambda^{j} \|f\|_{L^2}, \quad \|\chi_j\|_{H^1(D) }\lesssim  \Lambda^{j}\|f\|_{L^2}, \quad \mbox{and}\\
&\|\lambda_j\|_{H^1(U)} \lesssim  \Lambda^{j} \|f\|_{L^2}
\end{aligned}\end{equation}
for all open sets $U \subset \RR^2 \setminus \overline{\Omega},$ with compact closure.

Let $\delta \in \mathbb{C}$ be such that $|\delta| < \frac{1}{\Lambda}.$ For such $\delta,$ the operator $(I - \delta \mathcal{I}_k)$ is invertible, and its inverse defines a bounded linear operator on the space $\underline{H}^1(\Omega \setminus \overline{D}) \times H^{-1/2}(\partial \Omega) \times H^{-1/2}(\partial D).$ It is a standard fact that the associated Neumann series converges absolutely on compact subsets of $\{|\delta| \leqslant \frac{1}{\Lambda}\}$ . Using the linearity of the operators $\mathcal{A}, \mathcal{D}_k,\mathcal{T}, \mathcal{P}_k$ and the definition of the Neumann series, and \eqref{e.c*A}, we find
\begin{equation} \label{e.cdeldef} \begin{aligned}
    &\hat{c}_\delta := c^* + \sum_{j=0}^\infty \delta^{j+1} e_j = c^* + \sum_{j=0}^\infty \delta^{j+1} \mathcal{A} \left( \mathcal{D}_k \mathcal{T}\mathcal{P}_k \mathcal{I}_k^j\left( 0,\frac{\partial s}{\partial \nu_\Omega}, 0\right)  \right) \\
    & \quad \quad = c^* + \delta\mathcal{A} \left( \mathcal{D}_k \mathcal{T}\mathcal{P}_k \sum_{j=0}^\infty \delta^{j}\mathcal{I}_k^j \left(0, \frac{\partial s}{\partial \nu_\Omega}, 0\right)  \right) \\
    & \quad \quad =c^* + \delta \mathcal{A} \mathcal{D}_k \mathcal{T}\mathcal{P}_k (I - \delta \mathcal{I}_k)^{-1} \left(0, \frac{\partial s}{\partial \nu_\Omega}, 0\right) ,
\end{aligned}\end{equation}
satisfies $|\hat{c}_\delta| < \infty$ for all $|\delta| < \frac{1}{\Lambda}$. Next, we define, and compute using \eqref{e.phi0def} and \eqref{e.highcorrrec} that
\begin{equation} \label{e.phideldef}
\begin{aligned}
  &\hat{\phi}_\delta :=  \sum_{j=0}^\infty \delta^j \phi_j = \sum_{j=0}^\infty \delta^j \mathcal{P}_k \mathcal{I}_k^j \left(0,\frac{\partial s}{\partial \nu_\Omega}, 0 \right) \\
  &\quad = \mathcal{P}_k \left( (I - \delta \mathcal{I}_k)^{-1} \left(0,\frac{\partial s}{\partial \nu_\Omega} ,0 \right)\right)
\end{aligned}
\end{equation}
where, the convergence is understood in the sense of the Hilbert space $H^1(\Omega \setminus \overline{D})$.
Finally,  we define, and compute using \eqref{e.lam0chi0def} and \eqref{e.highcorrrec} that
\begin{equation}
    \begin{aligned}
    &(\hat{\lambda}_\delta,\hat{\chi}_\delta) :=  \sum_{j=0}^\infty \delta^j (\lambda_j,\chi_j) \\
    &\quad \quad = \sum_{j=0}^\infty \delta^j  \mathcal{H}_k \mathcal{T}\mathcal{P}_k \mathcal{I}_k^j \left(0, \frac{\partial s}{\partial \nu_\Omega}, 0\right) \\
       &\quad \quad = \mathcal{H}_k \mathcal{T}\mathcal{P}_k \left( (I - \delta \mathcal{I}_k)^{-1} \left(0,\frac{\partial s}{\partial \nu_\Omega}, 0 \right) \right),
    \end{aligned}
\end{equation}
where, once again, the convergence of the infinite sum is understood in the sense of the Hilbert space $H^1_{loc}(\RR^2 \setminus \overline{D}) \times H^1(D).$ \\

\textbf{3.} In this step we define a candidate for the solution $u_\delta$ of \eqref{e.PDE}. Indeed, define
\begin{equation} \label{e.candidate}
    \hat{u}_\delta (x):= \left\{
    \begin{array}{cc}
        \hat{c}_\delta \psi_e + s + \delta \hat{\lambda}_\delta & x \in \RR^2 \setminus \overline{\Omega} \\
        \hat{c}_\delta + \delta \hat{\phi}_\delta & x \in \Omega \setminus \overline{D},\\
        \hat{c}_\delta \psi_d + \delta \hat{\chi}_\delta & x \in D.
    \end{array}
    \right.
\end{equation}
By construction, $\hat{u}_\delta$ and $\frac{1}{\e_\delta} \frac{\partial u_\delta}{\partial \nu} $ are continuous along $\partial \Omega$ and $\partial D,$ and in particular, $\hat{u}_\delta \in H^1_{loc}(\RR^2)$ by Lemma \ref{l.extbd}. Using partial sums, it is easily seen that by construction, $\hat{u}_\delta$ is a weak solution to \eqref{e.PDE}. As, by assumption, \eqref{e.PDE} is non-resonant, $\hat{u}_\delta$ coincides with its unique solution.\\

\textbf{4.} We are ready to complete the proof of the theorem. Defining $v_j := \lambda_j$ in the exterior, $:= e_j + \phi_j$ in the ENZ region, and $:= \chi_j$ in the dopant, the conclusion of the theorem follows using the estimates \eqref{e.ejest} and \eqref{e.jcorrest} from Step 2. The bound on $v_j$ follows by combining the estimates in Step 2 with \eqref{e.sbound}.
\end{proof}

Having completed the proof of our main result, we turn to the proofs of the corollaries stated in Section \ref{ss.main}.

\begin{proof}[Proof of Corollary \ref{c.leadord1}] From the proof of Theorem \ref{t.mt}, we know that $u_\delta = \hat{u}_\delta,$ defined in \eqref{e.candidate}. It follows, therefore, that
\begin{equation*}
    u_\delta(x) - v_\delta(x)  = \left\{ \begin{array}{cc}
      \left( \sum_{j=2}^\infty \delta^j e_j\right)\psi_e  + \sum_{j=2}^\infty \delta^j \lambda_j,   &  x \in \RR^2 \setminus \overline{\Omega}\\
         \sum_{j=2}^\infty \delta^j(e_j + \phi_j) & x \in \Omega \setminus \overline{D}\\
         \left( \sum_{j=2}^\infty \delta^j e_j\right) \psi_d + \sum_{j=2}^\infty \delta^j \chi_j,   &  x \in D.
    \end{array} \right.
\end{equation*}
In the above, each of the summations is interpreted as a convergent series in $H^1$ of the appropriate region (and $H^1_{loc}$ in the exterior). The proof of the corollary is then immediate from \eqref{e.highcorrrec}, $|\delta | < \frac{1}{\Lambda},$ and estimates \eqref{e.ejest} and \eqref{e.jcorrest}.
\end{proof}

\begin{proof}[Proof of Corollary \ref{c.idealfluid}]
Since the Poynting vector is defined by  $\vec{S}_\delta = \frac{1}{2} \vec{E}_\delta \times \overline{\vec{H}_\delta},$ the proof of this corollary is a short computation that combines  $\vec{H}_\delta = (0,0,u_\delta),$ and $\vec{E}_\delta = \frac{1}{i\omega}(-\partial_{x_2}\phi_0,\partial_{x_1}\phi_0, 0) + O(\delta)$  (see Remark \ref{r.electricfield}), along with \eqref{e.quadest} to pass to the limit $\delta \to 0$ limit. The boundary conditions for $S$ come from the Neumann boundary conditions for $\phi_0$ in \eqref{e.phizerodef}.
\end{proof}

\section{What if the Dopant is Resonant?} \label{s.exam}
For any choice of the dopant geometry, there will be some choices of $k$ for which the dopant is resonant, and it is
natural to ask what happens then. The answer one finds in \cite{Eng2} is that:
\begin{align} \label{e.phys}
   & \mbox{if the dopant resonance is excited then the magnetic field}\\[-3pt] \notag
  &  \mbox{vanishes in the ENZ region
and } \mu_{\rm eff} = \infty\,.
\end{align}
This section starts by briefly discussing what goes wrong when the dopant is resonant; then we shall recover \eqref{e.phys} using our framework, and we'll go beyond it by identifying the leading-order electric field.

The dopant is resonant when $k^2$ is an eigenvalue
of the Laplace operator on $D$ with zero Dirichlet boundary conditions, i.e., there exist finitely many (say $m \in \mathbb{N}$) eigenfunctions $\{U_j\}_{j=1}^m \subset H^1_0(D)$ that are orthonormal in $L^2(D),$ and satisfy, for every $j = 1, \ldots, m,$
\begin{eqnarray*}
    -\Delta U_j &=& k^2 U_j \quad \mbox{in $D$}\\
     U_j &=& 0 \quad \quad \mbox{at $\partial D$}.
\end{eqnarray*}
The finiteness of $m$ follows from standard theory. For such $k$, our framework breaks down in one of two
mutually exclusive ways:

\begin{enumerate}
\item
Suppose first that the eigenfunctions $U_j, j=1, \ldots, m$ each satisfy $\int_D U_j = 0,$ i.e.,
they each have mean zero in $D.$ In this case the auxiliary function $\psi_d$ exists but it isn't unique. Indeed,
setting $\Psi_d := \psi_d - 1,$ the existence of $\psi_d$ is equivalent to the solvability of the PDE
\begin{align*}
    -\Delta \Psi_d = k^2 (\Psi_d + 1) \quad \quad \mbox{ in } D,
\end{align*}
for $\Psi_d \in H^1_0(D).$ By the Fredholm alternative, $\Psi_d$ exists, as the constant function $1$ is
orthogonal in $L^2(D)$ to $U_j,$ i.e., $ \int_D U_j = 0$ for each $j = 1, \ldots, m. $ However it is nonunique, since for any $\{\alpha_j\}_{j=1}^m \subset \mathbb{C},$ the choices
\begin{align} \label{e.psid.formula}
    \psi_d = \Psi_d + 1 = 1 + k^2\left[\sum_{\lambda \in \sigma(-\Delta)\setminus \{k^2\}}
    \frac{\int_D \phi_\lambda \,dx }{\lambda - k^2} \phi_\lambda \right]+ \sum_{j=1}^m \alpha_j U_j
\end{align}
solve \eqref{e.PDEpsidop}. (Here $\sigma(-\Delta)$ denotes the set of Laplacian eigenvalues with zero
Dirichlet boundary data, and $\phi_\lambda$ is the eigenfunction associated with eigenvalue $\lambda,$
normalized so that $\|\phi_\lambda\|_{L^2(D)} = 1.$)

Heuristically, we can say in this case that ``the dopant resonance isn't excited,'' since there is apparently nothing
preventing us from choosing $\alpha_j = 0$ for $j=1,\ldots,m$. Moreover, since $\int U_j = 0,$ the value of
$c^*$ (which feels $\psi_d$ only via $\int_D \psi_d$) is oblivious to the choice of $\{\alpha_j \}$. However,
the correction $\chi_0$ (see \eqref{e.chi0defintro}) depends on the choice of $\{\alpha_j\}$.

Let us record that, from \eqref{e.psid.formula},
\begin{align} \label{e.psid.formula2}
    \int_D \psi_d = |D| + k^2 \left[\sum_{\lambda \in \sigma(-\Delta)\setminus \{k^2\}} \frac{\bigl( \int_D \phi_\lambda\,dx\bigr)^2}{\lambda - k^2}\right].
\end{align}
(This sum is convergent: by Weyl's law, the $j$th eigenvalue $\lambda_j \sim j^{2/d} = j,$ so the factors $\frac{1}{\lambda - k^2}$ share the decay of only the harmonic series [which logarithmically blows up!]. However, the convergence of the series defining $\int \psi_d$ is assisted by the fact that the sequence of $L^2$-normalized eigenfunctions $\{\phi_\lambda\}_\lambda$ converge weakly in $L^2$ to zero, so $\int_D\phi_\lambda\,dx \to 0.$)
\medskip

\item
Suppose instead that one of the eigenfunctions $U_j$ is such that $\int_D U_j \neq 0.$ (This happens, for instance,
if $k^2$ is the smallest Dirichlet eigenvalue of $-\Delta$ in $D$ since the eigenspace is then simple and the eigenfunction --
taken to be real-valued -- doesn't change sign.) In this case, rather than nonuniqueness the difficulty is nonexistence:
the boundary value problem \eqref{e.PDEpsidop} describing $\psi_d$ doesn't have a solution.

For reasons that will become clear presently, we view this as the case when ``the dopant resonance is excited.''
\end{enumerate}

To achieve a better understanding, it is natural to consider a limit in which the resonant situation is approached via
nonresonant ones. There are (at least) two natural approaches:

\begin{enumerate}
\item[(i)] We can ask what happens as $k^2$ approaches an eigenvalue along the real axis; this means choosing an eigenvalue
$\lambda_*$ of $-\Delta$ in $D$ with the Dirichlet boundary condition, and considering $k$ such that
$\lambda_* - k^2 =  \gamma,$ where $\gamma$ is real with $0< |\gamma| \ll 1.$
\item[(ii)] We can consider what happens when resonance is prevented by material losses. This can be achieved, for
example, by taking the magnetic permeability $\mu$ to have a positive imaginary part. This amounts in our framework
to taking $k^2 = \lambda_* + i \gamma$, where $\lambda_*$ is an eigenvalue (as above) and $0 < \gamma \ll 1$.
\end{enumerate}
Either way, our analysis applies to the limit $\delta \rightarrow 0$ when $\gamma$ is held fixed. We are therefore able to consider the limiting behavior when $\delta \to 0$ \textit{first, then} $\gamma \to 0.$

Let us pursue this idea in case (2) (when $\psi_d$ doesn't exist). Our analysis encompasses both alternatives (i) and (ii) suggested above, and is in fact more general. Fixing $\lambda_* \in \sigma(-\Delta),$ we consider \textit{any} $k \in \mathbb{C}$ with $0 \leqslant \arg \, k < \pi$ such that $k^2$ is near $\lambda_*$, say
$$
\lambda_* - k^2 = \gamma \quad \mbox{ with } |\gamma| \mbox{ small but nonzero}.
$$
We shall identify the limiting behavior as $|\gamma| \to 0$ with
$$
0 < |\delta| \ll |\gamma| \ll 1.
$$
To begin, we observe (by an obvious adjustment to \eqref{e.psid.formula2})  that
\begin{align} \label{e.psidintspec}
\int_D \psi_d = |D| + k^2 \left[\sum_{\lambda \in \sigma(-\Delta)} \frac{\left( \int_D \phi_\lambda \,dx\right)^2}{\lambda - k^2}\right].
\end{align}
We are assuming that at least one of the eigenfunctions associated with $\lambda = \lambda_*$ has nonzero mean. It follows that
the value of \eqref{e.psidintspec} blows up as $|\gamma| \to 0$, though it is finite (and large, of order
$1/|\gamma|$) for fixed $\gamma\neq 0$. Therefore
$$
|\beta| \sim \left| \int_{\partial D} \frac{\partial \psi_d}{\partial \nu_D}\,d \sh^1 \right| \sim  \frac{1}{|\gamma|},
$$
and it follows that $c^*$ is small, and $|\mu_{\rm eff}|$ is large:
\begin{equation} \label{e.c*gam}
|c^*| = O(|\gamma|) \quad \mbox{and} \quad \mu_{\rm eff} = O(|\gamma|^{-1})
\end{equation}
For the rest of this section it is convenient to track the dependence of various quantities on $\gamma,$ and we do this by a subscript: for instance, we specify the dependence of $c^*$ on $\gamma$ through the notation $c^*_\gamma.$

Theorem \ref{t.mt} applies, and asserts that the leading order corrections in the ENZ region, in the $\delta \to 0$ limit (for fixed $\gamma$) satisfy \eqref{e.phizerodef}:
\begin{equation} \label{e.phizerodef.gamma} \left\{
    \begin{aligned}
   & -\Delta \phi^\gamma_0 = k_\gamma^2 c_\gamma^* \quad \quad \mbox{ in } \Omega \setminus \overline{D} \\
    & \frac{\partial \phi_0^\gamma}{\partial \nu_\Omega} = c_\gamma^* \frac{\partial \psi_e}{\partial \nu_\Omega} + \frac{\partial s}{\partial \nu_\Omega} \quad \quad \mbox{ on } \partial \Omega \\
    &\frac{\partial \phi^\gamma_0}{\partial \nu_D} = c_\gamma^* \frac{\partial \psi^\gamma_d}{\partial \nu_D} \quad \quad \mbox{ on } \partial D,\\
    & \fint_{\Omega \setminus \overline{D}} \phi^\gamma_0 \,dx = 0.
    \end{aligned} \right.
\end{equation}
It is convenient to pass to the $\gamma \to 0^+$ limit by writing \eqref{e.phizerodef.gamma} in weak form. Toward this end, testing \eqref{e.phizerodef} with $h \in C^\infty(\Omega \setminus \overline{D}),$ we obtain
\begin{multline} \label{e.gammalim}
    \int_{\Omega \setminus \overline{D}} \nabla \phi_0^\gamma \cdot \nabla h\,dx - \int_{\partial \Omega} h \left(c_\gamma^* \frac{\partial \psi_e}{\partial \nu_\Omega} + \frac{\partial s}{\partial \nu_\Omega}  \right) \,d \sh^1 - \int_{\partial D} h\left(c_\gamma^* \frac{\partial \psi^\gamma_d}{\partial \nu_D} \right)\,d \sh^1 = \\ k_\gamma^2 c_\gamma^* \int_{\Omega \setminus \overline D} h \,dx .
\end{multline}
Arguing formally, if $\phi_0^\gamma$ converges weakly to $\widehat{\phi_0}$ in $H^1(D)$, then since $k_\gamma^2 \to \lambda_*$ , $\psi_e, s$ are independent of $\gamma,$ and $c^*_\gamma \to 0$ as $\gamma \to 0,$  it is clear how to pass to the limit in the first two terms on the left, as well as the term on the right hand side of \eqref{e.gammalim}; it remains to pass to the limit in the term $\int_{\partial D} h c_\gamma^* \frac{\partial \psi_d^\gamma}{\partial \nu_D}\,d \sh^1.$ The evaluation of this limit is hindered by the fact that $c_\gamma^* \to 0$ while, formally, $\psi_d^\gamma$ blows up. This can however be easily analyzed by integration by parts:
\begin{equation}\label{e.IBP}
    \begin{aligned}
     c_\gamma^* \int_{\partial D} h \frac{\partial \psi_d^\gamma}{\partial \nu_D}\,d\sh^1 &= c_\gamma^* \int_D h \Delta \psi_d^\gamma\,dx + c_\gamma^* \int_D \nabla h\cdot \nabla \psi_d^\gamma \,dx \\
    &= -c_\gamma^*k_\gamma^2 \int_D h  \psi_d^\gamma \,dx + c_\gamma^* \int_D \nabla h \cdot \nabla \psi_d^\gamma \,dx .
    \end{aligned}
\end{equation}
Let $\{U_j\}_{j=1}^m$ denote the eigenfunctions of $-\Delta$ in $D$ with homogeneous Dirichlet boundary conditions associated with an eigenvalue $\lambda_*$; here, $m$ denotes the multiplicity of $\lambda_*,$ which is finite by Fredholm theory. Then, using \eqref{e.psid.formula} with $ \lambda_* - k^2 = \gamma \not\in \sigma(-\Delta),$ one easily finds that
\begin{align*}
    \lim_{\gamma \to 0} \gamma \beta_\gamma = \lambda_*^2 \sum_{j=1}^m \left( \int_D U_j\,dx \right)^2,
\end{align*}
and so, the limit
\begin{align} \label{e.cbar}
    \lim_{\gamma \to 0} \frac{c^*_\gamma}{\gamma} = -\frac{\int_{\partial \Omega} \frac{\partial s}{\partial \nu_\Omega}\,d \sh^1}{\gamma \beta_\gamma} \to - \frac{\int_{\partial \Omega} \frac{\partial s}{\partial \nu_\Omega}\,d \sh^1 }{\lambda_*^2 \sum_{j=1}^m \left( \int_D U_j\,dx\right)^2} =: \overline{C}.
 \end{align}
exists and is finite. Since $k_\gamma^2 \to \lambda_*$ as $\gamma \to 0,$ using \eqref{e.cbar}, it then follows that the right hand side of \eqref{e.IBP} converges to
\begin{equation*}
    \begin{aligned}
     \overline{C}\sum_{j=1}^m -\lambda_*^2 \left( \int_D U_j\,dx   \right) \left(\int_D h U_j\,dx  \right) + \lambda_* \left( \int_D U_j\,dx   \right) \left( \int_D \nabla h \cdot \nabla U_j \,dx \right),
    \end{aligned}
\end{equation*}
which, by another integration by parts, rewrites as
\begin{align} \label{e.bdryterm}
    \overline{C}\lambda_* \sum_{j=1}^m \left(\int_D U_j \,dx \right) \left( \int_{\partial D} h \frac{\partial U_j}{\partial \nu_D}\,d \sh^1\right).
\end{align}
Therefore, sending $\gamma \to 0$ in \eqref{e.gammalim}, and using \eqref{e.bdryterm}, we find that $\widehat{\phi_0}$ is the unique solution to the problem
\begin{equation} \label{e.phihatzero} \left\{
    \begin{aligned}
   & -\Delta \widehat{\phi_0} = 0 \quad \quad \mbox{ in } \Omega \setminus \overline{D} \\
    & \frac{\partial \widehat{\phi_0}}{\partial \nu_\Omega}    = \frac{\partial s}{\partial \nu_\Omega} \quad \quad \mbox{ on } \partial \Omega \\
    &  \frac{\partial \widehat{\phi_0}}{\partial \nu_D} = \overline{C} \lambda_* \sum_{j=1}^m \left(\int_D U_j \,dx \right) \frac{\partial U_j}{\partial \nu_D} \quad \quad \mbox{ on } \partial D,\\
    & \fint_{\Omega \setminus \overline{D}} \widehat{\phi_0} \,dx = 0.
    \end{aligned} \right.
\end{equation}
In particular, the electric field distribution in the ENZ region, to leading order, is still identified by this procedure. (We note that \eqref{e.phihatzero} does have a solution, since by \eqref{e.cbar} the consistency condition for Laplace's equation with Neumann boundary conditions is satisfied.)

The preceding discussion applies only in the double limit when $\delta \to 0$ first then $\gamma \to 0$.
It would be interesting to have a more complete understanding, for example one that applies for $\delta$ in a neighborhood of
$0$ that doesn't depend on $\gamma$. However this seems to require a method different from that of the present paper.
\section*{Acknowledgements}
The authors warmly thank Nader Engheta for bringing this problem to our attention, and for helpful discussions. This work was partially supported by grants from the National Science Foundation (DMS-2009746, RVK) and the Simons Foundation (award \# 733694, RVK and RV). We are grateful to the anonymous referee for helpful comments on our paper.
\bibliographystyle{abbrv}
 \bibliography{ENZ}

\end{document}